\newenvironment{changemargin}[2]{%
\begin{list}{}{%
\setlength{\topsep}{0pt}%
\setlength{\leftmargin}{#1}%
\setlength{\rightmargin}{#2}%
\setlength{\listparindent}{\parindent}%
\setlength{\itemindent}{\parindent}%
\setlength{\parsep}{\parskip}%
}%
\item[]}
{\end{list}}
\newcolumntype{L}{>{\scriptstyle}l}
\newcolumntype{C}{>{\scriptstyle}c}
\newcolumntype{R}{>{\scriptstyle}r}
\DeclareMathAlphabet{\mathpzc}{OT1}{pzc}{m}{it}
\let\oldtocsection=\tocsection
\let\oldtocsubsection=\tocsubsection
\let\oldtocsubsubsection=\tocsubsubsection
\renewcommand{\tocsection}[2]{\hspace{0em}\oldtocsection{#1}{#2}}
\renewcommand{\tocsubsection}[2]{\hspace{1em}\oldtocsubsection{#1}{#2}}
\renewcommand{\tocsubsubsection}[2]{\hspace{2em}\oldtocsubsubsection{#1}{#2}}
\newtheorem{bigthm}{Theorem}
\newtheorem{thm}{Theorem}[section]
\newtheorem*{thm*}{Theorem A (ii)'}
\newtheorem{lem}[thm]{Lemma}
\newtheorem{prop}[thm]{Proposition}
\newtheorem{cor}[thm]{Corollary}
\newtheorem{question}[thm]{Question}
\theoremstyle{definition}
\newtheorem{dfn}[thm]{Definition}
\theoremstyle{remark}
\newtheorem{example}[thm]{Example}
\newtheorem{rem}[thm]{Remark}
\newcommand{\pref}[2]{\hyperref[#2]{#1 \ref*{#2}}}
\newcommand{\id}{\ensuremath{\operatorname{id}}}
\newcommand{\im}{\ensuremath{\operatorname{im}}}
\newcommand{\Spin}{{\ensuremath{\operatorname{Spin}}}}
\newcommand{\so}{{\ensuremath{\operatorname{SO}}}}
\newcommand{\ort}{{\ensuremath{\operatorname{O}}}}
\newcommand{\bo}{B\ort}
\newcommand{\bso}{B\so}
\newcommand{\bspin}{B\Spin}
\newcommand{\diff}{\ensuremath{\operatorname{Diff}}}
\newcommand{\too}{\longrightarrow}
\newcommand{\rk}{\mathrm{rank}\,}
\newcommand{\res}{\mathrm{res}}
\newcommand{\inn}{\mathrm{int}}
\newcommand{\rst}{\mathrm{rst}}
\newcommand{\orient}{\mathrm{or}}
\newcommand{\congarrow}{\overset{\cong}\longrightarrow}
\newcommand{\embeds}{\hookrightarrow}
\newcommand{\actson}{\curvearrowright}
\DeclareMathOperator{\inddiff}{\mathrm{inddiff}}
\newcommand{\calR}{\mathcal{R}}
\newcommand{\bbN}{\mathbb{N}}
\newcommand{\bbZ}{\mathbb{Z}}
\newcommand{\bbR}{\mathbb{R}}
\newcommand{\ahat}{\hat{\mathcal{A}}}
\newcommand{\cp}[1]{\mathbb{CP}^{#1}}
\newcommand{\op}{{\mathrm{op}}}
\newcommand{\ko}{{\mathrm{KO}}}
\newcommand{\pt}{{\mathrm{pt}}}
\newcommand{\dt}{\mathrm{dt}}
\newcommand{\tor}{\mathrm{tor}}
\begin{document}
\author[Georg Frenck]{Georg Frenck}
\email{\href{mailto:georg.frenck@math.uni-augsburg.de}{georg.frenck@math.uni-augsburg.de}}
\email{\href{mailto:math@frenck.net}{math@frenck.net}}
\urladdr{\href{http://frenck.net/Math}{Frenck.net/Math}}
\address{Universität Augsburg, Universitätsstr.~14, 86159 Augsburg, Germany}

\subjclass[2010]{53C21, 55R40, 57R75, 57R90, 58D17, 58D05.}
\keywords{Positive scalar curvature, spaces of metrics, manifolds, spin, totally nonspin, diffeomorphisms}

\thanks{G.~F.~is supported by the DFG (German Research Foundation) through RTG 2229 (281869850) and the Special Priority Programme SPP 2026. This version of the article has been accepted for publication, after peer review (when applicable) but is not the Version of Record and does not reflect post-acceptance improvements, or any corrections. The Version of Record is available online at: http://dx.doi.org/10.1007/s00209-023-03270-1}

\title[PSC-Metrics on totally nonspin Manifolds]{Spaces of Positive Scalar Curvature metrics on totally nonspin Manifolds with spin boundary}

\begin{abstract} 
In this article we study the space of positive scalar curvature metrics on totally nonspin manifolds with spin boundary. We prove that for such manifolds of certain dimensions, those spaces are not connected and have nontrivial fundamental group. Furthermore we show that a well-known propagation technique for detection results on spaces of positive scalar curvature metrics on spin manifolds ceases to work in the totally nonspin case.
\end{abstract}

\maketitle

\section{Introduction}
\noindent For a closed smooth manifold $M$ we denote by $\calR^+(M)$ the space of all Riemannian metrics of positive scalar curvature (psc-metrics) equipped with the $C^\infty$-topology. In recent years, there has been a lot of research activity around such spaces of metrics with lower curvature bounds, see e.g. \cite{HSS, berw, CSS, erw_psc2, a-hat-bundles, ahatblock}. It has been shown that in the presence of a spin structure, $\calR^+(M)$ has a lot of interesting topological features like nontrivial homotopy or homology groups. 

In this article, we deviate from this setup in two ways: First, we consider compact manifolds $M$ with non-empty  boundary $\partial M$ and we require the metrics in $\calR^+(M)$ to be cylindrical in a neighbourhood of the boundary (see \pref{Section}{sec:prelims-psc} for a precise definition). Second, we want $\partial M$ to be $\Spin$, whereas $M$ shall be \emph{totally nonspin}, i.e. its universal cover $\widetilde{M}$ does not admit a spin structure. Employing this discrepancy, we show that index-theoretic secondary obstructions on the boundary can be used to distinguish components and elements in the fundamental group of $\calR^+(M)$.

We also observe that a well-known propagation technique for positive scalar curvature metrics on spin manifolds ceases to work in the totally nonspin case. More precisely we show that there exist non-isotopic psc-metrics on the disc which become isotopic after extending them arbitrarily to a totally nonspin manifold.

\begin{center}\emph{Throughout this article we will assume that $M$ and $\partial M$ admit psc-metrics.}\end{center}

\subsection{Spaces of metrics on manifolds with boundary} Consider the map
\[\res\colon\calR^+(M)\to\calR^+(\partial M)\]
given by restricting a psc-metric to the boundary which is a Serre-fibration by \cite[Theorem 1.1]{ebertfrenck}. For $h\in\calR^+(\partial M)$ we define $\calR^+(M)_h\coloneqq \res^{-1}(h)$. One powerful tool for studying the space of psc-metrics on a spin manifold $N$ is the index-difference first introduced by Hitchin in \cite{hitchin_spinors}. After fixing a base-point $g\in\calR^+(N)_h$, it induces a map 
\[\inddiff_g\colon \pi_k(\calR^+(N)_h)\to \ko^{-\dim(N)-k-1}(\pt). \]
This map is often non-trivial (see for example \cite[Theorem A]{berw}) and our first main result is the following.
\begin{bigthm}\label{main:boundary}
	Let $M$ be a compact, oriented, totally nonspin manifold of dimension $\dim M=d\ge8$ with non-empty spin boundary $\partial M$.
	\begin{enumerate}
		\item If $d\equiv0,1(8)$ and $\partial M$ is $\Spin\times B\pi_1(\partial M)$-nullbordant\footnote{See \pref{Section}{sec:psc-recollection} for a definition.}, there exists an $h\in\calR^+(\partial M)$ such that $\calR^+(M)_h$ is non-empty and the following composition is surjective 
			\[\pi_1(\calR^+(M))\overset{\res}\too\pi_1(\calR^+(\partial M))\overset{\inddiff_h}{\too}\ko^{-d-1}(\pt)\cong\bbZ/2.\]
		\item For any $h\in\calR^+(\partial M)$ such that $\calR^+(M)_h$ is non-empty, the following composition is surjective:
			\[\pi_0(\calR^+(M))\overset{\res}\too\pi_0(\calR^+(\partial M))\overset{\inddiff_h}{\too}\ko^{-d}(\pt).\]
	\end{enumerate}
\end{bigthm}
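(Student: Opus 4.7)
The plan is to prove both statements by combining known detection theorems for $\pi_k(\calR^+)$ of closed spin manifolds (applied to the boundary $\partial M$) with a collar-extension argument that lifts the detecting elements from $\calR^+(\partial M)$ to $\calR^+(M)$. Although $\res$ being a Serre fibration gives an abstract long exact sequence of homotopy groups, I sidestep the resulting obstruction analysis by producing representatives that are supported in an arbitrarily thin collar of $\partial M \subset M$ and hence extend trivially across the totally nonspin interior of $M$.

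For part (ii) the input is a $\pi_0$-detection result of Crowley--Schick / Hanke--Schick--Steimle type: given $h \in \calR^+(\partial M)$ and any $\alpha \in \ko^{-d}(\pt)$, there exists $h_\alpha \in \calR^+(\partial M)$ that is psc-\emph{concordant} to $h$, i.e.\ joined to $h$ by a psc-metric $G \in \calR^+(\partial M \times [0,1])$, and satisfies $\inddiff_h(h_\alpha) = \alpha$. Given $g \in \calR^+(M)_h$ and using the identification $M \cong M \cup_{\partial M}(\partial M \times [0,1])$, gluing $G$ onto $g$ along a collar produces $g_\alpha \in \calR^+(M)$ with $\res(g_\alpha) = h_\alpha$, so that $(\inddiff_h \circ \res)[g_\alpha] = \alpha$ and surjectivity on $\pi_0$ follows. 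The dimensional bound $d \ge 8$ comfortably accommodates the input detection theorem.

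For part (i) the analogous step is a $\pi_1$-detection theorem of Hanke--Schick / Botvinnik--Ebert--Randal-Williams flavour: the hypotheses $d \equiv 0,1 \pmod 8$ and the $\Spin \times B\pi_1(\partial M)$-nullbordability of $\partial M$ are precisely what is needed to produce a base-point $h \in \calR^+(\partial M)$ together with a loop $\gamma \colon S^1 \to \calR^+(\partial M)$ at $h$ whose image under $\inddiff_h$ generates $\ko^{-d-1}(\pt) \cong \bbZ/2$; the nullbordism supplies the spin-bordism datum on which a family-index calculation certifies non-triviality, and the congruence on $d$ aligns $\ko^{-d-1}(\pt)$ with $\bbZ/2$. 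The technical task is then to engineer both $h$ and the loop $\gamma$ to be supported in a thin collar of $\partial M$ via parametrized psc-surgery techniques, and to ensure that $h$ lies in $\res(\calR^+(M))$. Once this is achieved, concatenating $\gamma$ with any fixed $g \in \calR^+(M)_h$ on the complement of the collar yields a loop $\Gamma \colon S^1 \to \calR^+(M)$ with $\res \circ \Gamma = \gamma$, which establishes the surjectivity on $\pi_1$.

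The main obstacle is the collar-support requirement in part (i): one must simultaneously preserve the non-triviality of the family Dirac-index along $\gamma$ and localize the entire construction near the boundary, so that nothing that would interact with the (totally nonspin) interior of $M$ is needed. Concretely, the harder step will be to show that the standard bordism-theoretic construction of a detecting loop --- normally performed globally on $\partial M$ --- can be reorganized so that all parametrized Gromov--Lawson--Chernysh surgeries occur inside a single fixed collar, while the family-index computation on the $\Spin \times B\pi_1$-nullbordism continues to witness the $\bbZ/2$-generator. Part (ii), by contrast, only requires a single concordance extension and is formal once the closed-manifold detection theorem is invoked.
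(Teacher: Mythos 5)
Both parts of your proposal contain genuine gaps, and in both cases the gap is located exactly where the paper uses the totally nonspin hypothesis on $M$ --- a hypothesis your argument never invokes, which is itself a warning sign.

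For part (ii), the claimed input is false: if $h_\alpha$ is psc-concordant to $h$, i.e.\ joined to $h$ by a psc-metric $G\in\calR^+(\partial M\times[0,1])$, then $\inddiff_h(h_\alpha)=0$. This is the Lichnerowicz vanishing argument in its most basic form: the index-difference is (up to sign) the APS-index of the Dirac operator on the cylinder with the interpolating metric, and positive scalar curvature forces that index to vanish. So there is no $h_\alpha$ concordant to $h$ with $\inddiff_h(h_\alpha)=\alpha\neq 0$, and the collar-gluing step never gets off the ground. The paper's actual mechanism is to replace the concordance cylinder by a nontrivial spin bordism $(\partial M\times[0,1])\#X$ carrying the index $x$, use a right-stable metric on it to reach a boundary metric $h_x$ with $\inddiff_h(h_x)=-x$, and then dispose of the resulting $X$-summand in the \emph{oriented} bordism category (passing through $X^{\orient}=X\#(\cp2\times S^{d-4})$) precisely because the interior $M$ is totally nonspin, so that the relevant tangential $2$-type is $\so\times B\pi_1(M)$ and the Gromov--Lawson surgery theorem rel boundary applies.

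For part (i), the localization step cannot be carried out. Producing a loop $\gamma$ supported ``in a thin collar'' amounts to lifting $\gamma$ along the restriction map $\calR^+(\partial M\times[0,1])_{\cdot,\,h}\to\calR^+(\partial M)$ at the free end of the cylinder; the obstruction to such a lift lives in $\pi_0$ of the fibre and is detected by the index-difference of the Hitchin metric it clutches, so it is \emph{nonzero}. Equivalently: if your plan worked, it would establish part (i) with no hypothesis on the interior topology of $M$, but the statement is false for spin $M$, where $\partial[\gamma]\in\pi_0(\calR^+(M)_h)$ is detected by $\inddiff_g$ and therefore does not vanish. What the paper actually does is compute the boundary map $\partial\colon\pi_1(\calR^+(\partial M),h)\to\pi_0(\calR^+(M)_h,g)$ on the diffeomorphism-orbit explicitly (Lemmas~\ref{lem:boundary-map} and~\ref{lem:hitchin-boundary}): $\partial[\gamma]$ is represented by $F^*g$ for $F$ a diffeomorphism of $M$ clutching a Hitchin sphere $\Sigma$ inside an embedded disc. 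Then Lemma~\ref{lem:rigidity}, built on the mapping-class-group rigidity theorem (Theorem~\ref{thm:action-of-mcg} and Corollary~\ref{cor:action-of-mcg}) together with the elementary Proposition~\ref{prop:nullbordant} that every homotopy sphere is oriented nullbordant, shows $F^*g\sim g$ for the totally nonspin $M$. Only then is $\partial[\gamma]=0$, and exactness gives the lift. The $\Spin\times B\pi_1(\partial M)$-nullbordism of $\partial M$ is used in Corollary~\ref{cor:action-of-mcg} to supply a right-stable cap, not to run a family-index computation as you suggest.
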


\begin{rem}
	\begin{enumerate}
		\item The most prominent examples of totally nonspin manifolds are even-dimensional complex projective spaces. For $D^{4k}\subset \cp{2k}$ an embedded open disk, \pref{Theorem}{main:boundary} implies
		\begin{align*}
			&\pi_1(\calR^+(\cp{4n}\setminus D^{8n}))\not=1\quad\ \ \, \text{ for $n\ge1$}\\
			|&\pi_0(\calR^+(\cp{2n}\setminus D^{4n}))| = \infty \quad \text{ for $n\ge2$.}
		\end{align*}
		\item \pref{Theorem}{main:boundary} remains true, if we consider spaces of metrics with for example mean convex or minimal boundary instead of requiring product form near the boundary. By \cite[Corollary 40]{BH20} the space of psc-metrics on $M$ which satisfy one of these boundary conditions and whose restriction to the boundary is again of positive scalar curvature, is homotopy equivalent to $\calR^+(M)$. 
	\end{enumerate}
\end{rem}

\begin{rem}[State of the art]
To the best of the author's knowledge, \pref{Theorem}{main:boundary} is the first detection result concerning higher homotopy groups of the space of psc-metrics on a nonspin manifold. Furthermore, it is also the first such result about $\calR^+(M)$ for manifolds with boundary, where the manifold and its boundary are allowed to have isomorphic fundamental groups and can even be simply connected.
				
		 In the case where $M$ is nonspin, but its universal cover is $\Spin$, there are cases where $\calR^+(M)$ is not connected or has even infinitely many path components, see \cite{BG95, reiser, DGA, wermelinger, goodman, dessai, goodmanwermelinger}. For totally nonspin manifolds, Kastenholz--Reinhold give an example of a closed, totally nonspin manifold of dimension $6$ whose space of psc-metrics has infinitely many components in \cite{kastenholzreinhold}. 
		 
		 In \cite[Theorem 44]{BH20}, B\"ar--Hanke give examples of spin manifolds $M$ with boundary where $\calR^+(M)$ has a non-vanishing homotopy group. In their examples however, $M$ has infinite fundamental group and the induced map $\pi_1(\partial M)\to \pi_1(M)$ is not an isomorphism.
\end{rem}

\subsection{Gluing in psc-metrics on disks}
Let $N$ be a closed $d$-dimensional spin manifold. One nice feature of the index-difference map is the following \enquote{propagation principle}: let $D\subset N$ be an embedded disk and let $h$ be a psc-metric on $S^{d-1}$ such that $\calR^+(N\setminus \inn(D))_{h}\not=\emptyset\not=\calR^+(D^d)_{h}$. The additivity theorem for the index-difference \cite[Proposition 3.18]{berw} implies that
\[\inddiff_{g_0}([g_1]) = \inddiff_{g_0\cup g}([g_1\cup g]),\] 
for any $g\in\calR^+(N\setminus \inn(D))_{h}$ and any $g_0,g_1\in\calR^+(D^d)_{h}$. Therefore, non-isotopic psc-metrics on the disc which are detected by the index-difference remain non-isotopic after being extended arbitrarily to another spin manifold, implying one can propagate detection results. The following result states that in the totally nonspin case, this is no longer possible.

\begin{bigthm}\label{main:hitchin}
	Let $d\equiv0,1(8)$, $d\ge8$ and let $M$ be a closed, oriented $d$-dimensional, totally nonspin manifold with $D\subset M$ an embedded disc. Let $h\in\calR^+(S^{d-1})$ such that $\calR^+(M\setminus \inn(D))_{h}\not=\emptyset\not=\calR^+(D^d)_{h}$ and let $g\in\calR^+(M\setminus \inn(D))_{h}$ be arbitrary.	
	Then there exist metrics $g_0, g_1\in\calR^+(D^d)_{h}$ with $\inddiff_{g_0}(g_1)\not=0\in\ko^{-d-1}(\pt)\cong \bbZ/2$, such that $g_0\cup g$ and $g_1\cup g$ are isotopic. 
\end{bigthm}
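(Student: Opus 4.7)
The plan is to realize the two metrics $g_0$ and $g_1$ as $g_0$ and $\varphi^* g_0$ for a suitable diffeomorphism $\varphi\in \Diff(D^d,\partial D^d)$, and then to show that the extension $\bar\varphi\in\Diff(M)$ of $\varphi$ by the identity on $M\setminus \inn(D)$ is isotopic to $\id_M$ in $\Diff(M)$. Granting both, the pullback of $g_0\cup g$ along any such isotopy $\bar\varphi_t$ gives an isotopy of psc-metrics from $g_0\cup g$ to $\bar\varphi^*(g_0\cup g)=g_1\cup g$; the last equality uses that $\bar\varphi$ acts as $\varphi$ on $D$ and fixes the metric $g$ on the complement.

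For the first ingredient, I would invoke a Hitchin-type construction in its refinement due to Crowley--Schick and Crowley--Schick--Steimle: for $d\equiv 0,1\pmod{8}$ and $d\geq 8$, there exists $\varphi\in \Diff(D^d,\partial D^d)$, naturally associated to an exotic $(d+1)$-sphere with nontrivial $\alpha$-invariant, such that for a suitable $g_0\in \calR^+(D^d)_h$ the metric $\varphi^*g_0$ satisfies $\inddiff_{g_0}(\varphi^*g_0)\neq 0$ in $\ko^{-d-1}(\pt)\cong \bbZ/2$. Since $\varphi$ fixes $\partial D^d$ pointwise, $\varphi^*g_0$ indeed lies in $\calR^+(D^d)_h$.

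The heart of the proof is the isotopy $\bar\varphi\simeq \id_M$, and this is where the totally nonspin hypothesis enters. The obstruction to $\bar\varphi$ being isotopic to the identity can be organized as a $\Spin$-bordism class attached to a mapping-torus construction involving $\bar\varphi$ and the tangent structure of $M$. In the spin case this class recovers precisely the nonzero $\alpha$-invariant of the exotic sphere associated to $\varphi$, which is exactly why the propagation principle works there. In the totally nonspin case, however, the tangent structure of $M$ does not lift across $B\Spin$ (not even after passing to the universal cover), so the relevant normal class maps to zero in the $\alpha$-receiving bordism group, the obstruction vanishes, and the desired isotopy exists. An alternative, more geometric route is to exploit $w_2(\widetilde{M})\neq 0$ directly: one can find a loop of embeddings $D^d\hookrightarrow M$ based at the inclusion whose monodromy in $\Diff(D^d,\partial D^d)$ is $\varphi$, producing the isotopy $\bar\varphi\simeq \id_M$ by hand.

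The main obstacle I expect is making this isotopy step precise: either translating $[\bar\varphi]\in\pi_0(\Diff(M))$ into a computable $\Spin$-bordism invariant and verifying that the totally nonspin hypothesis trivializes it, or else explicitly constructing the loop of embeddings realizing $\varphi$ as monodromy via the nontriviality of $w_2(\widetilde{M})$. All the remaining pieces -- the existence of disc metrics with nontrivial index-difference, the compatibility of the pullback with gluing, and the identification $\bar\varphi^*(g_0\cup g)=g_1\cup g$ -- are standard.
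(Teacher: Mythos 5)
Your first two moves — choose a Hitchin diffeomorphism $\varphi$ of $(D^d,\partial D^d)$ with $\inddiff_{g_0}(\varphi^*g_0)\neq 0$, set $g_1:=\varphi^*g_0$, and note that $(\varphi\cup\id)^*(g_0\cup g)=g_1\cup g$ — coincide with the paper's. The gap lies in what you call the heart of the proof. You assert that $\bar\varphi:=\varphi\cup\id$ is isotopic to $\id_M$ in $\Diff(M)$ and try to deduce this from the vanishing of a mapping-torus bordism class, but that implication is not valid: a nullbordant mapping torus does not produce an isotopy of diffeomorphisms, and nothing about $M$ being totally nonspin forces $[\bar\varphi]=0\in\pi_0(\Diff(M))$. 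The class $[\bar\varphi]$ is the image of the exotic twist under $\pi_0(\Diff_\partial(D^d))\to\pi_0(\Diff(M))$, whose triviality is an inertia-type question about the smooth structure of $M$, not a bordism question; there is no reason to expect it to vanish here. Your alternative geometric route is circular: a loop of embeddings of $D^d$ in $M$ with monodromy $\varphi$ exists precisely when $[\bar\varphi]=0\in\pi_0(\Diff(M))$, which is the very claim you are trying to establish, and $w_2(\widetilde M)\neq 0$ gives no handle on this.

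What the paper actually proves is strictly weaker and relies on a substantial external theorem: the rigidity result \pref{Theorem}{thm:action-of-mcg} (from \cite{actionofmcg}), which says the action of the $\theta$-mapping class group $\Gamma^\theta(M,\hat l)$ on $\pi_0(\calR^+(M))$ factors through $\Omega^\theta_{d+1}$ via the mapping-torus construction, where $\theta$ is the tangential $2$-type — for $M$ totally nonspin, $\theta=\bso\times B\pi_1(M)$. \pref{Lemma}{lem:rigidity} then checks that $\bar\varphi$ lifts to $\Gamma^\theta(M,\hat l)$, that $T_{\bar\varphi}$ is $\so\times B\pi_1(M)$-cobordant to $M\times S^1\amalg\Sigma$, and — the geometric crux — that the Hitchin sphere $\Sigma$ is \emph{oriented} nullbordant (\pref{Proposition}{prop:nullbordant}: signature and Euler characteristic vanish), so $T_{\bar\varphi}$ is cobordant to a mapping torus of the identity. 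The conclusion is only that $\bar\varphi^*$ acts trivially on $\pi_0(\calR^+(M))$, i.e.\ $\bar\varphi^*(g_0\cup g)$ is \emph{isotopic} to $g_0\cup g$ as a psc-metric; no isotopy of $\bar\varphi$ to $\id_M$ is produced or required. This is also exactly where the spin/nonspin dichotomy bites: $\Sigma$ is not spin-nullbordant (its $\alpha$-invariant is nonzero), so the same argument cannot trivialize $\bar\varphi^*$ when $M$ is spin. To repair your proposal, replace ``isotopic to $\id_M$'' with ``acts trivially on $\pi_0(\calR^+(M))$'' and import the rigidity theorem for the mapping class group action.
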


\begin{rem}
\begin{enumerate}
	\item As mentioned above, \pref{Theorem}{main:hitchin} does not hold in the spin case. Even further, we have the following result (cf. \cite[Theorem E]{erw_psc2}): if $N$ is closed, $2$-connected and of dimension $d\ge6$, there exists a \emph{right-stable} metric $g_{\rst}\in\calR^+(N\setminus \inn(D))_h$ for some boundary metric $h\in\calR^+(S^{d-1})$. By definition of right-stability, the gluing map 
\[\mu_{g_{\rst}}\colon \calR^+(D^d)_{h} \too \calR^+(N),\ g\mapsto g_{\rst}\cup g\]
is a weak homotopy equivalence, in particular bijective on components. If $M$ is as above and $h\in \calR^+(S^{d-1})$ extends to a psc-metric on both $D^d$ and $M\setminus\inn(D)$, then  \pref{Theorem}{main:hitchin} implies that \emph{no} metric in $\calR^+(M\setminus \inn(D))_h$ is right-stable.
	\item \pref{Theorem}{main:hitchin} also holds if one replaces $D$ by any closed, codimension $0$ submanifold $W\subset M$ which admits a spin structure. This follows easily from the above-mentioned additivity theorem for the index-difference by considering a disc inside $W$.
\end{enumerate}
\end{rem}

\noindent In dimensions $d\equiv3(4)$, we have $\ko^{-d-1}\cong \bbZ$. Since the map $\inddiff$ is surjective on components by \cite[Theorem D]{berw}, there is an infinite family $(g_n)_{n\in\bbN}\subset\calR^+(D^d)_{h_\circ}$ of pairwise non-isotopic psc-metrics. An elementary argument shows that these metrics become concordant\footnote{Two psc-metrics $g_0$ and $g_1$ on $M$ are called \emph{concordant} if $\calR^+(M\times[0,1])_{g_0\amalg g_1}\not=\emptyset$, i.e. if they can be connected by a psc-metric $G$ on the cylinder. If $M$ itself has boundary $\partial M$, we require the concordance to be cylindrical in a neighbourhood of $\partial M\times [0,1]$.} after extending them arbitrarily to a totally nonspin manifold. We present this proof in \pref{Section}{sec:concordance}.

\subsection*{Acknowledgements} I would like to acknowledge the Oberwolfach Research Institute for Mathematics for its hospitality during the workshop \enquote{Analysis, Geometry and Topology of Positive scalar curvature metrics}, where this project was initiated. I thank Thomas Schick and Johannes Ebert for an inspiring discussion and Bernhard Hanke as well as Simone Cecchini and Jannes Bantje for comments on a draft of this paper.

\section{Preliminaries}
\subsection{Spaces of metrics of positive scalar curvature on a manifold with boundary}\label{sec:prelims-psc}
For a closed manifold $M$ let $\calR^+(M)$ denote the space of all metrics of positive scalar curvature. If $M$ has non-empty boundary $\partial M$ let $c\colon [0,1]\times\partial M \embeds M$ be a collar embedding such that $c(\{0\}\times \partial M)= \partial M\subset M$ and $c|_{\{0\}\times\partial M}$ is the canonical projection. We define $\calR^+(M)$ to consist of all metrics $g$ on $M$ such that there exists a psc-metric $h$ on $\partial M$ and an $\epsilon>0$ with
\[c^*g|_{[0,\epsilon] \times\partial M} = \dt^2 + h,\]
i.e. we require metrics on $M$ to be \emph{cylindrical in a neighbourhood of the boundary}. One nice feature about having cylindrical boundary is that the restriction map
\[\res\colon\calR^+(M)\too\calR(\partial M)\] 
actually lands in the subspace $\calR^+(\partial M)$ of psc-metrics. For $h\in\calR^+(\partial M)$ we define $\calR^+(M)_h\coloneqq \res^{-1}(h)$. By \cite[Theorem 1.6]{ebertfrenck} the map $\res$ is a Serre-fibration and hence for $h\in\calR^+(\partial M)$ and $g\in\calR^+(M)_h$ we have a long exact sequence of homotopy groups
\begin{equation}\label{eq:fibration}
	\dots\to\pi_k(\calR^+(M),g)\to\pi_k(\calR^+(\partial M),h)\overset{\partial}\to\pi_{k-1}(\calR^+(M)_h,g)\to\dots
\end{equation}
To prove \pref{Theorem}{main:boundary} (i) we need to get a more explicit description of the boundary map in this exact sequence. For our purposes it suffices to identify the boundary map on the image of the orbit of pullback-action of the topological group\footnote{This also carries the weak $C^\infty$-topology. Note that with this topology, the pullback map $\diff_\partial(M)\to\calR^+(M)$ is continuous.} $\diff_\partial(M)$ of self-diffeomorphisms of $M$. Let $(f_t)_{t\in [0,1]^k}$ be a continuous family of diffeomorphisms of $\partial M$ such that $f_t=\id$ for $t$ near $\partial [0,1]^k$. For $s\in[0,1]^{k-1}$ we define $F_s\colon M\to M$ by
\begin{equation}\label{eq:fiber-transport}
	F_s(x) = \begin{cases}
	x & \text{ if } x\not\in\im c\\
	c(r, f_{s,(1-r)}(y)) &\text{ if } x=c(r,y)
\end{cases}
\end{equation}
Note that by our assumption on the family $(f_t)$ we have $F_s=\id$ for $s$ near $\partial [0,1]^{k-1}$ and $c(r, f_{s,r}(y)) = c(r,y)$ for $r$ near $0$ or $1$. Therefore, $F_s$ is a smooth diffeomorphism and restricts to the identity in a neighbourhood of the boundary.

\begin{lem}\label{lem:boundary-map}
	Let $h\in\calR^+(\partial M)$ and $g\in \calR^+(M)_h$ be base points. Then \[\partial\bigr((f_t^*h)_{t\in [0,1]^k}\bigr)=(F_s^*g)_{s\in[0,1]^{k-1}}.\]
\end{lem}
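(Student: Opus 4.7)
\medskip
\noindent\emph{Proof sketch.} The plan is to exhibit an explicit cube-lift of the family $(f_t^*h)_{t\in[0,1]^k}$ through the Serre fibration $\res$, arranged so that the boundary faces are constant except the top face, and then to read off the connecting homomorphism from that top face. Recall that for a Serre fibration $p\colon E\to B$ and $\alpha\colon([0,1]^k,\partial[0,1]^k)\to(B,b)$, the class $\partial[\alpha]\in\pi_{k-1}(p^{-1}(b),e)$ is represented by the restriction to $[0,1]^{k-1}\times\{1\}$ of any lift $\tilde\alpha\colon[0,1]^k\to E$ that is constantly $e$ on all other faces of $\partial[0,1]^k$. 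So it suffices to construct such a lift whose top face is (homotopic to) $(F_s^*g)_{s\in [0,1]^{k-1}}$.

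The natural candidate uses the collar. Pick a smooth cut-off $\tau\colon[0,1]\to[0,1]$ with $\tau\equiv 1$ near $0$ and $\tau\equiv 0$ near $1$, and define a family of diffeomorphisms
\[
\tilde\Phi_{s,u}(x)=\begin{cases} x & x\notin \im c,\\ c\bigl(r,\,f_{s,u\tau(r)}(y)\bigr) & x=c(r,y),\end{cases}\qquad (s,u)\in[0,1]^{k-1}\times[0,1],
\]
extending smoothly since $f_{s,u\tau(r)}=\id$ when $u\tau(r)$ lies in the identity-neighbourhood (e.g.\ when $r$ is close to $1$, so $u\tau(r)$ is close to $0$, and $(s,0)\in\partial[0,1]^k$). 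I would then set $\tilde\alpha(s,u)\coloneqq \tilde\Phi_{s,u}^*g$. On the subcollar where $\tau\equiv1$, $\tilde\Phi_{s,u}$ is the translation-invariant map $c(r,y)\mapsto c(r,f_{s,u}(y))$, so $\tilde\alpha(s,u)$ is cylindrical of the form $\dt^2+f_{s,u}^*h$ there and in particular lives in $\calR^+(M)$. Checking the lift conditions: $\res\tilde\alpha(s,u)=f_{s,u}^*h$ is the correct projection; at $u=0$ and at $s\in\partial[0,1]^{k-1}$ the parameter $(s,u\tau(r))$ lies in $\partial[0,1]^k$, so $\tilde\Phi_{s,u}=\id$ and $\tilde\alpha(s,u)=g$. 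Thus $\tilde\alpha$ is a valid lift with all boundary faces constantly $g$ except possibly $[0,1]^{k-1}\times\{1\}$.

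It remains to identify $\tilde\alpha(s,1)=\tilde\Phi_{s,1}^*g$ with $F_s^*g$ in $\pi_{k-1}(\calR^+(M)_h,g)$. The diffeomorphism $\tilde\Phi_{s,1}$ has the form $c(r,y)\mapsto c(r,f_{s,\tau(r)}(y))$ whereas $F_s$ uses $1-r$ in place of $\tau(r)$; both $\tau$ and $r\mapsto 1-r$ are smooth functions $[0,1]\to[0,1]$ taking the values $1$ at $r=0$ and $0$ at $r=1$. The linear homotopy $\tau_\beta(r)=(1-\beta)\tau(r)+\beta(1-r)$ stays in $[0,1]$ and satisfies $\tau_\beta(0)=1$, $\tau_\beta(1)=0$, so the corresponding diffeomorphisms $c(r,y)\mapsto c(r,f_{s,\tau_\beta(r)}(y))$ are identity in a neighbourhood of $\partial M$ (because $f_{s,t}=\id$ for $t$ near $\partial[0,1]$). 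Pulling back $g$ produces a continuous homotopy in $\calR^+(M)_h$ from $\tilde\Phi_{s,1}^*g$ to $F_s^*g$, constant in $s\in\partial[0,1]^{k-1}$. This shows $[\tilde\alpha(\cdot,1)]=[(F_s^*g)_s]$ in $\pi_{k-1}(\calR^+(M)_h,g)$, which by the definition of the connecting homomorphism gives $\partial\bigl((f_t^*h)_t\bigr)=(F_s^*g)_s$.

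The main obstacle is the cylindricality requirement imposed by the definition of $\calR^+(M)$: the naïve lift $c(r,y)\mapsto c(r,f_{s,u(1-r)}(y))$ fails to produce a cylindrical metric near $\partial M$ because the diffeomorphism depends on $r$ there. The cut-off $\tau$ fixes this at the cost of a mild mismatch with the formula (\ref{eq:fiber-transport}), which the last homotopy absorbs.
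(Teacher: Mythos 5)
Your proof is correct and follows the same strategy as the paper's own: build an explicit cube-lift of $\alpha(t)=f_t^*h$ through the Serre fibration $\res$ with all boundary faces constantly $g$ except the top one, and read off the connecting homomorphism from the top face. The one substantive difference is your use of a smooth cut-off $\tau$ (with $\tau\equiv1$ near $0$, $\tau\equiv0$ near $1$) in place of the factor $1-r$ that the paper uses to define its lift $\tilde F_{s,\ell}(c(r,y))=c(r,f_{s,(1-r)\ell}(y))$, and your accompanying affine homotopy $\tau_\beta$ to reconcile the top face with $F_s^*g$. This is not merely cosmetic. For $0<\ell<1$ and $s$ away from $\partial[0,1]^{k-1}$, the point $(s,(1-r)\ell)$ does not stay near $\partial[0,1]^k$ as $r\to0$, so the collar diffeomorphism $y\mapsto f_{s,(1-r)\ell}(y)$ genuinely depends on $r$ near $r=0$; the pullback $c^*(\tilde F_{s,\ell}^*g)$ then acquires $dr\otimes dy$ cross terms and is not of the required form $\dt^2+h'$, i.e.\ $\tilde F_{s,\ell}^*g$ is not cylindrical near $\partial M$ and hence, as written, does not lie in $\calR^+(M)$. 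Your cut-off makes the collar diffeomorphism $r$-independent near $r=0$, which is exactly what cylindricality demands, and the homotopy $\tau_\beta$ stays in $\calR^+(M)_h$ because the identity-neighbourhood of $(f_t)$ near $\partial[0,1]^k$ absorbs the $r$-dependence of $\tau_\beta$ near $r=0$ and $r=1$. So: same approach, but your version handles a point on which the paper's stated lift is slightly off, and your extra homotopy step is both necessary and correctly executed.
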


\begin{proof}
	Consider the following lifting problem: 
\begin{center}
\begin{tikzpicture}
	\node (0) at (0,1.2) {$J^{k-1}$};
	\node (1) at (0,0) {$[0,1]^{k}$};
	\node (2) at (3,1.2) {$\calR^+(M)$};
	\node (3) at (3,0) {$\calR^+(\partial M)$};
	
	\draw[left hook->] (0) to (1);
	\draw[->] (0) to (2);
	\draw[->] (1) to node[above, sloped]{$\alpha$} (3);
	\draw[->] (2) to (3);
	\draw[->, dashed] (1) to node[above, sloped]{$\beta$} (2);
\end{tikzpicture}
\end{center}
where $J^{k-1}\coloneqq [0,1]^{k-1}\times\{0\})\cup (\partial [0,1]^{k-1}\times[0,1])$ and $\alpha(t) = f_t^*h$. Since the boundary map in (\ref{eq:fibration}) is determined by $\partial(\alpha) = \beta|_{[0,1]^{k-1}\times\{1\}}$, we need to specify a lift $\beta$. Note that in our case $J^{k-1}\subset \partial[0,1]^k$ maps to the base point metric $g$. We define a family $\tilde F_{s,\ell}$ of diffeomorphisms of $M$ for $s\in[0,1]^{k-1}$ and $\ell\in[0,1]$ as follows:
\[\tilde F_{s,\ell}(x)\coloneqq\begin{cases}
	x &\text{ if } x\not\in\im c\\
	c(r, f_{s,(1-r)\ell}(y)) &\text{ if } x=c(r,y)	
\end{cases}.\]
This satisfies: 
\begin{itemize}
	\item[-] $\tilde F_{s,\ell} = \id$ for $(s,\ell)\in J^{k-1}$.
	\item[-] $\tilde F_{s,\ell}(c(0,y)) = c(0, f_{s,\ell}(y)) = f_{s,\ell}(y)$.
	\item[-] $\tilde F_{s,1}(c(r,y)) = c(r,f_{s,(1-r)}(y))$.
\end{itemize}
In particular, one possible choice of $\beta$ is given by $\tilde F_{s,\ell}^*g$ and the proof is finished by the observation that $\tilde F_{s,1} = F_s$ for $F_s$ the diffeomorphism defined in (\ref{eq:fiber-transport}).
\end{proof}

\subsection{Recollection of existence results for positive scalar curvature}\label{sec:psc-recollection}

Before we get to positive scalar curvature, let us recall tangential structures. Let $\bo(n)$ denote the classifying space for rank $n$-vector bundles. Extending an orthonormal matrix by $1$ in the lower right corner induces a stabilisation map $\bo(n)\to\bo(n+1)$. A \emph{tangential structure} is defined to be a fibration $\theta\colon B\to \bo(n)$. Given a tangential structure $\theta$ a \emph{$\theta$-structure} on a $\rk n$-vector bundle $V\to X$ is a bundle map $\hat\ell\colon V\to \theta^*U_n$ for $U_n\to B\ort(n)$ the universal rank $n$ vector bundle. A (stabilised) $\theta$-structure on  a manifold $M$ of dimension $d\le n$ is a $\theta$-structure on $TM\oplus \underline\bbR^{n-d}$ and a (stabilised) $\theta$-manifold is a manifold together with a $\theta$-structure. For $d<n$ we denote the cobordism group of closed $\theta$-manifolds by $\Omega_d^\theta$.\footnote{This condition ensures that we have a notion of $\theta$-structure on cobordisms of $d$-manifolds.} 

\begin{example}
Consider the fibrations $\theta_{\Spin,\pi}\colon \bspin(n)\times B\pi\to \bo(n)$ and $\theta_{\so,\pi}\colon \bso(n)\times B\pi\to \bo(n)$. The corresponding cobordism groups are simply the singular cobordism groups $\Omega_d^\Spin(B\pi) = \Omega_d^{\theta_{\Spin,\pi}}$ and $\Omega_d(B\pi) = \Omega_d^{\theta_{\so,\pi}}$. If $M_0$ and $M_1$ represent the same class in $\Omega_d^\Spin(B\pi)$ (resp. $\Omega_n^{\theta_{\so,\pi}}$), we will say that $M_0$ and $M_1$ are \emph{$\Spin\times B\pi$-cobordant} (resp. \emph{$\so\times B\pi$-cobordant}). In the special case that $M_0=\emptyset$, we say $M_1$ is \emph{$\Spin\times B\pi$-nullbordant} (resp. \emph{$\so\times B\pi$-nullbordant}).
\end{example}

\noindent Recall the following useful lemma, stating that doubles are $\theta$-cobordant to cylinders.

\begin{lem}[{\cite[Proposition 3.25]{actionofmcg}}]\label{lem:double}
		Let $W^d\colon M_0 \leadsto M_1$ be a $\theta$-cobordism. Then there exists a $\theta$-structure on $W^\op\colon M_1\leadsto M_0$ such that $W\cup W^\op$ is $\theta$-cobordant to $M_0\times [0,1]$ relative to $M_0\times\{0,1\}$.
\end{lem}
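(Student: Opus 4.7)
The plan is to take the simplest candidate cobordism $X \coloneqq W\times [0,1]$ and identify its (corner-smoothed) boundary with the required gluing. As a manifold with corners, $X$ has three boundary faces: the two copies $W\times\{0\}$ and $W\times\{1\}$, and the side face $\partial W\times [0,1] = (M_0\times[0,1])\sqcup (M_1\times[0,1])$. After rounding corners, $\partial X$ is homeomorphic to
\[\bigl(W\cup_{M_1\times\{0\}}(M_1\times[0,1])\cup_{M_1\times\{1\}} W\bigr)\cup_{M_0\sqcup M_0} (M_0\times[0,1]),\]
and the first summand is in turn homeomorphic to the double $W\cup_{M_1} W^\op$. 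This realises $X$ as a cobordism between $W\cup W^\op$ and $M_0\times[0,1]$, glued along the common boundary $M_0\sqcup M_0$, and by construction everything restricts to the product structure on $M_0\times\{0,1\}$, so the cobordism is relative to $M_0\times\{0,1\}$.

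Next I would transport the $\theta$-structure. The tangent bundle of $X$ stably splits as $p^*TW\oplus \underline{\bbR}$ with $p\colon W\times[0,1]\to W$ the projection, so the given $\theta$-structure on $TW\oplus\underline{\bbR}^{n-d}$ pulls back to a $\theta$-structure on $T X\oplus\underline{\bbR}^{n-d-1}$. At a boundary point of $X$ one converts this bundle datum to a tangential $\theta$-structure on the boundary by contracting off the inward normal. On the face $W\times\{0\}$ the inward normal is $+\partial_t$ and one recovers the original $\theta$-structure on $W$; on the face $W\times\{1\}$ the inward normal is $-\partial_t$, and by definition this is the $\theta$-structure on $W^\op$ that we declare. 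On the side face $M_i\times[0,1]$ one obtains the product $\theta$-structure. These structures agree on the common corners and therefore assemble to a global $\theta$-structure on $\partial X$, which provides a $\theta$-cobordism $W\cup W^\op \leadsto M_0\times[0,1]$ rel boundary.

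The only real subtlety is the corner-smoothing step: the identifications above are only homeomorphisms, but by the standard corner-straightening technique for manifolds with corners (or equivalently by using a collar of the codimension-two corner $(M_0\sqcup M_1)\times\{0,1\}$ to thicken the picture) one obtains a genuine smooth structure on $X$ with the advertised smooth boundary, into which the $\theta$-structure extends uniquely up to homotopy. This is the step I would expect to be the main bookkeeping obstacle, but it is purely formal and independent of the specific tangential structure $\theta$.
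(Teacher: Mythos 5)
The paper does not prove this lemma; it cites it from the author's earlier work \cite[Proposition 3.25]{actionofmcg} without reproducing an argument, so there is no in-paper proof to compare against. On its own merits, your proof is correct and is the standard (essentially the only natural) argument for statements of this type. Taking $X = W\times[0,1]$, pulling back the stabilised $\theta$-structure along the projection $p\colon X\to W$ using $TX\oplus\underline{\bbR}^{n-d-1}\cong p^*(TW\oplus\underline{\bbR}^{n-d})$, and then reading off the induced boundary structures correctly produces the required $\theta$-cobordism and simultaneously \emph{defines} the $\theta$-structure on $W^\op$ whose existence the lemma asserts, namely the restriction to $W\times\{1\}$ with inward normal $-\partial_t$.

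One small imprecision worth flagging: the codimension-two corner of $X$ sits along $(M_0\sqcup M_1)\times\{0,1\}$, but the two parts of it play different roles. You want to \emph{smooth} only the portion $M_1\times\{0,1\}$, in order to assemble the three faces $W\times\{0\}$, $M_1\times[0,1]$, $W\times\{1\}$ into the single closed incoming face $W\cup_{M_1}W^\op$; the portion $M_0\times\{0,1\}$ should be \emph{kept} as the codimension-two edge of the rel-boundary cobordism (or replaced by a short collar $(M_0\sqcup M_0)\times[0,\epsilon]$ if one prefers a cornered model of a cobordism rel boundary). Your parenthetical about thickening along the full corner $(M_0\sqcup M_1)\times\{0,1\}$ conflates these two roles. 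This is cosmetic rather than a gap, and the rest of the argument is sound.
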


\noindent For positive scalar curvature questions, the relevant tangential structure one needs to consider is usually the tangential $2$-type of the underlying manifold.

\begin{dfn}
	Let $M$ be a connected manifold of dimension $d<n$ and let $\tau\colon M\to\bo(n)$ be the stabilised classifying map of the tangent bundle. For $k\ge1$ we define the \emph{stabilised tangential $k$-type of $M$} as the $k$-th stage of the Moore-Postnikov tower for the map $\tau$.
\end{dfn}

\begin{example}\label{ex:2-types}
	\begin{enumerate}
		\item If $M$ is $\Spin$ and $n>d$, the stabilised tangential $2$-type of $M$ is given by $\bspin(n)\times B\pi_1(M)$.
		\item If $n>d$ and $M$ is orientable, totally nonspin, the stabilised tangential $2$-type of $M$ is given by $\bso(n)\times B\pi_1(M)$.
%
	\end{enumerate}
\end{example}

\noindent The reason, why the tangential $2$-type shows up in the study of positive scalar curvature metrics, originates from the famous Gromov--Lawson--Schoen--Yau surgery theorem \cite{gl80a, schoenyau_classical}. It states that the existence of a psc-metric is invariant under surgeries of codimension at least $3$ on the underlying manifold. We have the following, cobordism-theoretic translation of the surgery theorem.

\begin{lem}[{\cite[Theorem A]{gl80a}}, {\cite[Theorem 1.5]{ebertfrenck}}]\label{lem:gajer}
	Let $M$ be $\Spin$ (resp. totally nonspin) and $\Spin\times B\pi_1(M)$-cobordant (resp. $\so\times B\pi_1(M)$-cobordant) to some manifold $N$ of positive scalar curvature. Then $M$ admits positive scalar curvature. If $N$ and $M$ have equal boundary and the cobordism is constant on the boundary the same result holds and the metrics on $M$ and $N$ can be chosen to restrict to the same metric on the boundary.
\end{lem}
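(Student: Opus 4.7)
The plan is to reduce the claim to the classical Gromov--Lawson surgery theorem, which states that if $M$ is obtained from a psc-manifold $M'$ by a surgery of codimension at least $3$, then $M$ carries a psc-metric agreeing with the original one outside a neighbourhood of the surgery. Given a $\theta$-cobordism $W^{d+1}\colon N\leadsto M$ (with $\theta$ the relevant tangential $2$-type from Example~\ref{ex:2-types}), I would choose a handle decomposition of $W$ starting from $N$. A handle of index $k$ attached to a level surface inside $W$ corresponds to a surgery of codimension $d+1-k$, which is $\ge 3$ exactly when $k\le d-2$. So if one can arrange that the handle decomposition of $W$ relative to $N$ uses only handles of index in $[3,d-2]$, one can propagate the psc-metric from $N$ to $M$ by iterating Gromov--Lawson.

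The nontrivial point is therefore the handle-trading step: turning a general $\theta$-cobordism into one whose handle decomposition relative to $N$ has no handles of index $\le 2$. This is exactly where the hypothesis on the tangential $2$-type enters. For $\theta=\theta_{\Spin,\pi}$ or $\theta=\theta_{\so,\pi}$, the map $\theta$ is $3$-coconnected, so a $\theta$-structure lifts through the $2$-skeleton uniquely up to contractible choice. Using standard Smale-type handle trading arguments (as worked out in this setting by Gajer and, more explicitly for the $\theta$-framed case, in \cite{ebertfrenck}), one removes index-$0$ handles by connectedness, trades index-$1$ handles against index-$3$ handles using the $B\pi_1$-factor (which identifies $\pi_1(M)\cong\pi_1(N)\cong\pi_1(W)$), and trades index-$2$ handles using the remaining $\theta$-datum (the spin structure in the $\Spin$ case, and the orientation plus totally-nonspin hypothesis in the other case, which allow the attaching $2$-sphere to be framed compatibly so that a geometric cancellation becomes available). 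Applying Lemma~\ref{lem:double} where convenient to replace cobordisms by doubles then cylinders also helps ensure the handle-trading moves stay inside the same $\theta$-bordism class. The dimensional assumption $d\ge 6$ (implicit in the surgery argument) ensures we are in the Whitney-trick range where these handle moves actually go through.

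Once $W$ is built from $N$ by attaching only handles of index $k\in[3,d-2]$, an induction on the number of handles using Gromov--Lawson yields a psc-metric on $M$. For the relative statement, one repeats the argument for a cobordism that is constant on the boundary: every handle is attached in the interior of $W$, away from $\partial M\times[0,1]$, so the surgery modifications take place away from the boundary and the prescribed boundary metric is preserved throughout the induction, producing a psc-metric on $M$ restricting to the given boundary metric.

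The main obstacle in this plan is the compatibility of the handle trading with the $\theta$-structure: one has to check that every handle slide, Whitney disk, or cancellation used to eliminate low-index handles can be performed so that the $\theta$-structure on $W$ extends throughout. This is the step that genuinely uses the hypothesis that $\theta$ captures the $2$-type rather than merely the orientation, and it is the only place where the distinction between the $\Spin$ and the totally nonspin case enters in an essential way.
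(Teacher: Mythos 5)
The overall route---reducing to iterated Gromov--Lawson surgeries along a handle decomposition of a $\theta$-cobordism, with the tangential $2$-type hypothesis feeding a handle-trading step---is indeed how the cited results of Gromov--Lawson and Ebert--Frenck are established, and the paper itself only cites these references rather than reproving the lemma; so the question is whether your reconstruction of the standard argument is sound.

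There is a genuine orientation error in the handle-trading step. You correctly observe that an index-$k$ handle attached going from $N$ to $M$ induces a surgery of codimension $d+1-k$ on the level manifold, and that Gromov--Lawson requires $d+1-k\geq 3$, i.e.\ $k\leq d-2$. But the handles you then propose to eliminate, namely those of index $\leq 2$ in the decomposition relative to $N$, are already harmless: an index-$0$, $1$ or $2$ handle gives a surgery of codimension $d+1$, $d$ or $d-1$, all at least $3$ once $d\geq 4$. The problematic handles are instead those of index $d-1$, $d$, $d+1$ relative to $N$, or equivalently of index $0$, $1$, $2$ relative to $M$, and it is those that the tangential-$2$-type hypothesis is needed to trade away. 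Concretely: one surgers $W$ in its interior, compatibly with the $\theta$-structure, until $W\to B$ is $3$-connected; since $\theta$ is chosen so that $M\to B$ is $2$-connected, this forces the inclusion $M\hookrightarrow W$ to be $2$-connected, hence $W$ admits a handle decomposition relative to $M$ with all handles of index $\geq 3$, equivalently a decomposition relative to $N$ with all handles of index $\leq d-2$. Nothing is required of the structure map on $N$ and no trading at the $N$ end is needed. Your description of trading index-$1$ against index-$3$ handles via the $B\pi_1$-factor and index-$2$ against index-$4$ handles via the remaining $\theta$-datum is the right mechanism, and the relative version (keeping the cobordism cylindrical near the boundary) goes through as you say, but the trading has to be carried out looking from the $M$ end of $W$, not the $N$ end.
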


\noindent The version for manifolds with boundary can be derived from \cite[Theorem 1.6 and proof of Theorem 1.5]{ebertfrenck}. We also need an existence result for psc-metrics on cobordisms. First, let us recall the notion of right-stable metrics. Let $W\colon M_0\leadsto M_1$ be a manifold with boundary $\partial W=M_0\amalg M_1$ and $h_i\in\calR^+(M_i)$. Then a metric $g\in\calR^+(W)_{h_0,h_1}$ is called \emph{right-stable} if for every cobordism $V\colon M_1\leadsto M_2$, the map 
\[\mu_g\colon \calR^+(V)_{h_1,h_2}\to \calR^+(W\cup V)_{h_0,h_2},\ \ G\mapsto g\cup G\]
is a weak homotopy equivalence. On the $\pi_0$-level, this implies that every psc-metric on $M$ is homotopic to one which restricts to $g$ on $W$. Ebert--Randal-Williams have proven the following result.
\begin{thm}[{\cite[Theorem E]{erw_psc2}}]\label{thm:right-stable}
	If $\dim W\ge 6$ and $M_1\embeds W$ is $2$-connected, then for any $h_0\in\calR^+(M_0)$, there exists an $h_1\in\calR^+(M_1)$ and a right-stable metric $g\in\calR^+(W)_{h_0,h_1}$.
\end{thm}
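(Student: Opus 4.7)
The plan is to reduce the statement to the parametric surgery theorem of Chernysh–Walsh, in its family version due to Ebert–Randal-Williams, by realising $W$ as a composition of traces of surgeries of codimension at least three, each equipped with a standard psc-metric.

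Since $M_1\hookrightarrow W$ is $2$-connected and $\dim W\ge 6$, a standard handle-trading argument yields a handle decomposition of $W$ relative to $M_1$ in which every handle has index at least three. Viewed from the opposite end, this gives a decomposition
\[ W=(M_0\times[0,1])\cup T_1\cup T_2\cup\cdots\cup T_N, \]
where each $T_i$ is the trace of a surgery, performed on the current top boundary, along an embedded sphere of codimension at least three. The hypothesis $\dim W\ge 6$ enters in order to realise the handle cancellations as actual ambient isotopies. I would then construct $g$ inductively by concatenating the product metric $h_0+\dt^2$ on $M_0\times[0,1]$ with the standard Gromov–Lawson–Walsh psc-metric on each $T_i$; the restriction of $g$ to $M_1$ provides the desired boundary metric $h_1$.

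Right-stability of $g$ would follow from the parametric surgery theorem, which asserts that for a trace $T\colon N\leadsto N'$ of a surgery of codimension $\ge 3$ carrying the standard metric $g^\st\in\calR^+(T)_{h,h'}$, and for any cobordism $X\colon N'\leadsto N''$ with psc-metric $h''$ on $N''$, the concatenation map
\[ \calR^+(X)_{h',h''}\too\calR^+(T\cup X)_{h,h''},\qquad G\mapsto g^\st\cup G, \]
is a weak homotopy equivalence. Applying this iteratively across the $T_i$, the gluing map $\mu_g\colon\calR^+(V)_{h_1,h_2}\to\calR^+(W\cup V)_{h_0,h_2}$ for any cobordism $V\colon M_1\leadsto M_2$ decomposes as an $N$-fold composition of such concatenation maps, and is therefore itself a weak equivalence.

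The main obstacle is the parametric surgery theorem itself: constructing the standard metric on a surgery trace in such a way that concatenation with it induces a fibre-wise weak equivalence requires an intricate parametric deformation argument. This goes back to Chernysh, was refined by Walsh using torpedo/boot metrics, and attains its full family-wise formulation in \cite{erw_psc2}. With that ingredient in hand, the strategy outlined here is essentially formal.
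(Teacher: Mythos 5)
The theorem is quoted from Ebert--Randal-Williams \cite[Theorem E]{erw_psc2} rather than reproved here, so the comparison is with their argument, and your sketch is a faithful outline of it: handle-trading (using $2$-connectedness of $M_1\embeds W$ and $\dim W\ge 6$) decomposes $W$, read from the $M_0$ end, into traces of codimension-$\ge 3$ surgeries, and one then concatenates the standard metrics on these traces, whose right-stability is precisely the content of the parametric surgery theorem in the form established in \cite{erw_psc2}. You correctly isolate the nontrivial input and correctly observe that the remaining step — right-stability of $g_1\cup g_2$ given right-stability of each $g_i$, applied inductively across the handle decomposition — is a purely formal consequence of the definition.
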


\begin{rem}\label{rem:chernysh}
\begin{enumerate}
	\item There is no restriction on $M_0$ except for admitting positive scalar curvature. In particular, if $M_0$ is empty, \pref{Theorem}{thm:right-stable} implies the existence of right-stable metrics on nullcobordisms if $\partial W\embeds W$ is $2$-connected. Therefore, we have the following implication: If $W\subset M$ is a closed, codimension $0$ submanifold such that $\partial W\embeds W$ is $2$-connected, then there exists a psc-metric $g\in\calR^+(W)_h$ such that the gluing map
	\[\mu_g\colon \calR^+(M\setminus \inn(W))_{h}\to \calR^+(M),\ \ G\mapsto g\cup G\]
	is a weak homotopy equivalence.
	\item A parametrised version of the surgery theorem has been proven by Chernysh \cite{chernysh} (see also \cite{ebertfrenck} and \cite{kordass}). In its general form, it states that for $k\ge3$ the metric $g_N + g_\tor$ is a right-stable metric on $N\times D^k$, $g_\tor$ a torpedo metric\footnote{A torpedo-metric is an $O(k)$-invariant psc-metric on $D^k$ that is cylindrical near the boundary and restricts to the round metric on the boundary (cf. \cite[Definition 2.9]{ebertfrenck}).} and $g_N$ any metric on $N$ such that $g_N+g_\tor$ has positive scalar curvature. In particular, the torpedo metric on $D^k$ itself is right-stable. \pref{Theorem}{thm:right-stable} can be seen as an improved parametrised surgery theorem.
\end{enumerate}
\end{rem}

\subsection{The action of $\diff(M)$ on $\calR^+(M)$}

To prove \pref{Theorem}{main:boundary} (i) and \pref{Theorem}{main:hitchin}, we use a rigidity result for the pullback-action $\diff(M)\actson \calR^+(M)$. Let us recall the definition of the structured mapping class group.

\begin{dfn}
For $(M,\hat l)$ be a stabilised $\theta$-manifold, the \emph{$\theta$-mapping class group} denoted by $\Gamma^\theta(M,\hat l)$ is given by
\[\Gamma^\theta(M,\hat l)\coloneqq\left\{(f,L)\colon \begin{array}{l}f\colon M\congarrow M\text{ is a diffeomorphism}\\L\text{ is a homotopy of bundle maps } \hat l\circ df\leadsto \hat l \end{array}\right\}\Big/\sim\]
where the equivalence relation is given by homotopies of $f$ and $L$. 
\end{dfn}
\noindent There is a forgetful homomorphism 
\begin{equation}\label{eq:forgetful}
	\Phi\colon \Gamma^\theta(M, \hat l)\to\pi_0(\diff(M))
\end{equation} to the ordinary mapping class group, which in general is neither surjective nor injective. By composing $\Phi$ with the pullback action, we get an action $\Gamma^\theta(M,\hat l)\actson \pi_0(\calR^+(M))$. Furthermore, there is a homomorphism $T\colon\Gamma^\theta(M,\hat l)\to\Omega_{d+1}^\theta$ mapping $(f,L)$ to the \emph{mapping torus} $T_f$ with a $\theta$-structure determined by $L$. We have the following rigidity result about this pullback action. 

\begin{thm}[{\cite[Corollary 3.32]{actionofmcg}}]\label{thm:action-of-mcg}
	If $\theta$ is the stabilised tangential $2$-type of a closed manifold $M$ of dimension at least $6$, then the action $\Gamma^\theta(M,\hat l)\actson \pi_0(\calR^+(M))$ factors through $\Omega_{d+1}^\theta$ via the mapping torus construction. In particular, if $T_f$ is $\theta$-cobordant to a mapping torus of the identity, then $f^*g$ is homotopic to $g$ for all $g\in\calR^+(M)$.
\end{thm}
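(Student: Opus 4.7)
The plan is to reduce the statement about the action on $\pi_0(\calR^+(M))$ to a question about psc-metrics on mapping tori, which the surgery-theoretic hypothesis directly addresses. The key observation is that a path in $\calR^+(M)$ from $g$ to $f^*g$ (with cylindrical ends) is the same data as a psc-metric on the cylinder $M\times[0,1]$ restricting to $g\amalg f^*g$. Upon applying the mapping torus identification $(x,1)\sim(f(x),0)$, such a metric descends to a psc-metric on $T_f$ of product form $g+\dt^2$ in a tubular neighbourhood of the resulting fibre, and conversely cutting any such product-near-a-fibre psc-metric on $T_f$ open yields the desired isotopy. Hence the entire claim reduces to producing a psc-metric on $T_f$ that takes the product form $g+\dt^2$ near some chosen fibre $M\hookrightarrow T_f$.

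Next I would establish the base case on the trivial mapping torus: the product $g+ds^2$ on $T_\id=M\times S^1$ (with the $S^1$-factor rescaled sufficiently so scalar curvature stays positive) is psc and of product form $g+\dt^2$ near every fibre. Given the hypothesis that $T_f$ is $\theta$-cobordant to $T_\id$, one wants to transport this standard-form psc-metric from $T_\id$ to $T_f$ via \pref{Lemma}{lem:gajer}. Ordinary application of the surgery theorem only produces \emph{some} psc-metric on $T_f$, so the technical job is to perform the transport relative to a chosen fibre so that the prescribed product structure on $M$ is preserved. Concretely, I would upgrade the given cobordism $W\colon T_f\leadsto T_\id$ to a relative $\theta$-cobordism $(W;M\times[0,1])$ that is the cylinder $M\times[0,1]$ on a tube connecting the chosen fibres of $T_f$ and $T_\id$.

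This refinement step is the main obstacle. The standard route is to use \pref{Lemma}{lem:double}: after cutting both $T_f$ and $T_\id$ open along the chosen fibres one obtains $\theta$-bordisms between copies of $M$ that agree on the boundary, and then the doubling/mapping-torus interchange provided by \pref{Lemma}{lem:double} produces a $\theta$-cobordism between $T_f$ and $T_\id$ whose $\theta$-structure agrees with the product $\theta$-structure on the fibre-tube $M\times[0,1]$. Once such a relative cobordism is in place, the relative (manifold-with-boundary) version of the Gromov--Lawson--Schoen--Yau surgery theorem invoked in \pref{Lemma}{lem:gajer} extends the product psc-metric $g+\dt^2$ on the fibre-tube across all handles of $W$ of codimension $\ge 3$; the codimension hypothesis is where $\dim M\ge 6$ and the fact that $\theta$ is the tangential $2$-type of $M$ enter (the tangential $2$-type guarantees that handle attachments of index $\le 2$ can be traded away).

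Applying this refined surgery statement to the relative cobordism yields a psc-metric on $T_f$ that coincides with $g+\dt^2$ near the chosen fibre. By Step~1 this provides the isotopy $g\simeq f^*g$ in $\calR^+(M)$, proving the second assertion. The first assertion then follows formally: the construction above shows that the orbit of $[g]\in\pi_0(\calR^+(M))$ under $(f,L)\in\Gamma^\theta(M,\hat\ell)$ depends only on the $\theta$-bordism class $[T_f]\in\Omega^\theta_{d+1}$ of the mapping torus, so the action factors through the mapping-torus homomorphism $T\colon\Gamma^\theta(M,\hat\ell)\to\Omega^\theta_{d+1}$.
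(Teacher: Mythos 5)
Your central reduction---that a psc-metric on the cylinder $M\times[0,1]$ with cylindrical ends restricting to $g$ and $f^*g$ is ``the same data as'' a path in $\calR^+(M)$ from $g$ to $f^*g$---is only correct in one direction. A path $(g_t)_{t\in[0,1]}$ does give such a metric after slowing it down (take $g_t+C^2\,\dt^2$ for $C\gg0$), but a psc-metric on $M\times[0,1]$ restricting to $g$ and $f^*g$ is precisely a \emph{concordance} between $g$ and $f^*g$, and ``concordance implies isotopy'' is the open conjecture referred to explicitly in \pref{Section}{sec:concordance} of this very paper. Cutting a product-near-a-fibre psc-metric on $T_f$ open therefore yields only a concordance, not the isotopy the theorem asserts. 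Your cobordism-transport construction (doubling via \pref{Lemma}{lem:double}, propagating across a relative cobordism by \pref{Lemma}{lem:gajer}) is sound for producing a psc-metric on $T_f$ of the desired form, but the conclusion you draw from it is too strong, and this is exactly the gap that makes \pref{Theorem}{thm:action-of-mcg} nontrivial rather than a corollary of the surgery theorem alone.

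This statement is a citation to \cite[Corollary~3.32]{actionofmcg}, so the present paper does not supply a proof; the argument there does not route through concordance at all. It instead builds on the parametrised surgery machinery and the existence of \emph{right-stable} metrics (\pref{Theorem}{thm:right-stable}) to define, for suitable $\theta$-cobordisms $W\colon M\leadsto M$, maps on $\pi_0(\calR^+(M))$ that are weak homotopy equivalences depending only on the relative bordism class of $W$, and then identifies the action of $(f,L)$ with the map induced by the mapping cylinder of $f$. That mechanism is what upgrades the desired conclusion from the level of concordance to genuine path-components. If you want to salvage your write-up, the missing ingredient is precisely this right-stability input; without it the argument stops at concordance (compare \pref{Theorem}{thm:concordance}, which is the concordance-level analogue proved in this paper by essentially the strategy you describe).
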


\noindent Note that a mapping torus of the identity need not be $\theta$-nullbordant, even though the underlying manifold $T_{\id}$ is simply given by $M\times S^1$: An example is already given by $S^1$ which is a mapping torus of $\id\colon \{\pt\}\to\{\pt\}$ but admits a spin structure which is not spin nullbordant. We need the following version of \pref{Theorem}{thm:action-of-mcg} for manifolds with boundary.

\begin{cor}\label{cor:action-of-mcg}
	Let $M$ and $\theta$ be as above and let $N\subset M$ be a closed, codimension $0$ submanifold such that the inclusion $\partial N\embeds N$ is $2$-connected. Let $(f,L)\in\Gamma^\theta(M, \hat l)$ such that $f|_N=\id$. Then there exists an $h\in\calR^+(\partial N)$ with the following property: If $T_{f}$ is $\theta$-cobordant to a mapping torus of the identity, then $(f|_{M\setminus N})^*g$ is homotopic to $g$ for all $g\in\calR^+(M\setminus N)_{h}$.
\end{cor}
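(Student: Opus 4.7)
The plan is to reduce to the closed-manifold statement \pref{Theorem}{thm:action-of-mcg} by gluing in a right-stable metric on the fixed submanifold $N$, and then transport the resulting homotopy back through the associated gluing equivalence.

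Since $\partial N\embeds N$ is $2$-connected and $\dim N=\dim M\ge 6$, I first invoke \pref{Remark}{rem:chernysh}(i) -- which is \pref{Theorem}{thm:right-stable} applied with empty incoming boundary -- to obtain a boundary metric $h\in\calR^+(\partial N)$ together with a right-stable metric $g_N\in\calR^+(N)_h$ such that the gluing map
\[\mu_{g_N}\colon\calR^+(M\setminus\inn(N))_h\too\calR^+(M),\quad G\mapsto g_N\cup G,\]
is a weak homotopy equivalence. This $h$ is the one whose existence the corollary asserts; if $\calR^+(M\setminus\inn(N))_h$ happens to be empty the claim is vacuous, so I pick an arbitrary $g$ there.

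Set $\bar g\coloneqq g_N\cup g\in\calR^+(M)$. Since $f|_N=\id$, the pullback $f^*\bar g$ equals $g_N$ on $N$ and $(f|_{M\setminus\inn(N)})^*g$ on the complement, giving
\[f^*\bar g=\mu_{g_N}\bigl((f|_{M\setminus\inn(N)})^*g\bigr).\]
If $T_f$ is $\theta$-cobordant to a mapping torus of the identity, \pref{Theorem}{thm:action-of-mcg} applied to the closed $\theta$-manifold $M$ of dimension $\ge 6$ yields $f^*\bar g\simeq\bar g$ in $\calR^+(M)$. Because $\mu_{g_N}$ is a weak equivalence, and hence bijective on $\pi_0$, this homotopy pulls back to $(f|_{M\setminus\inn(N)})^*g\simeq g$ in $\calR^+(M\setminus\inn(N))_h$, which is precisely the desired conclusion.

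The only mildly delicate point, and what I expect to be the sole place where care is needed, is verifying that $g_N\cup g$ and $f^*(g_N\cup g)$ are genuinely smooth psc-metrics on $M$ and that the displayed pullback decomposition holds on the nose. Both rely on the fact that metrics in $\calR^+(\cdot)_h$ are cylindrical near $\partial N$ and that $f|_N=\id$ (so $f$ preserves the decomposition $M=N\cup_{\partial N}(M\setminus\inn(N))$ and matches along the collar), after which everything is a formal consequence of \pref{Theorem}{thm:action-of-mcg} combined with the gluing equivalence $\mu_{g_N}$.
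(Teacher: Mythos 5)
Your proof is correct and follows essentially the same route as the paper's: obtain a right-stable metric $g_N\in\calR^+(N)_h$ via \pref{Theorem}{thm:right-stable}, observe that $f|_N=\id$ makes the gluing map $\mu_{g_N}$ intertwine $(f|_{M\setminus N})^*$ with $f^*$, apply \pref{Theorem}{thm:action-of-mcg} to $M$, and transport the homotopy back along the weak equivalence $\mu_{g_N}$. The paper packages the intertwining as a commutative square rather than the pointwise identity $f^*\bar g=\mu_{g_N}\bigl((f|_{M\setminus\inn(N)})^*g\bigr)$, but the content is identical.
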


\begin{proof}
By \pref{Theorem}{thm:right-stable} the assumption on $N$ implies that there exists an $h\in\calR^+(\partial N)$ and a right-stable metric $g_\rst\in\calR^+(N)_h$. The map $\calR^+(M\setminus N)_{h}\embeds\calR^+(M)$ given by gluing in $g_\rst$ is hence a homotopy equivalence. Since $f|_{N}=\id$, we have the following commutative diagram
\begin{center}
\begin{tikzpicture}
	\node (0) at (0,1.2) {$\calR^+(M\setminus N)_{h}$};
	\node (1) at (0,0) {$\calR^+(M\setminus N)_{h}$};
	\node (2) at (3,1.2) {$\calR^+(M)$};
	\node (3) at (3,0) {$\calR^+(M)$};
	
	\draw[->] (0) to node[left]{$(f|_{M\setminus N})^*$} (1);
	\draw[->] (0) to node[above]{$\simeq$} (2);
	\draw[->] (1) to node[above]{$\simeq$} (3);
	\draw[->] (2) to node[right]{$f^*$} (3);
\end{tikzpicture}
\end{center}
where the horizontal maps are equal homotopy equivalences. Therefore $f^*$ is homotopic to the identity if and only if $(f|_{M\setminus N})^*$ is homotopic to the identity. The rest follows from \pref{Theorem}{thm:action-of-mcg}.
\end{proof}


\noindent We also need a detection result due to Hitchin \cite{hitchin_spinors} which has later been generalised by Crowley--Schick \cite{CS} and Crowley--Schick--Steimle \cite{CSS}. 
\begin{thm}[{\cite[Theorem 4.7]{hitchin_spinors}, \cite[Corollary 1.2]{CS}, \cite[Corollary 1.9]{CSS}}]\label{thm:css}
	Let $N$ be a compact spin manifold of dimension $d\ge6$, $h\in\calR^+(\partial N)$ and  $g\in\calR^+(N)_h$. Let furthermore $k\ge0$ such that $d+k+1\equiv1,2(8)$. Then the composition
	\[\pi_k(\diff_\partial(D^d))\to\pi_k(\diff_\partial(N))\to \pi_k(\calR^+(N)_h)\to \ko_{d+k+1}\cong\bbZ/2\]
	is split surjective. Here, the first map is given by extending a diffeomorphism on an embedded disk by the identity, second map is induced by the orbit map $f\mapsto f^*g$ and the last map is the index-difference. 
\end{thm}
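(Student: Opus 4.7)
The plan is to follow Hitchin's $\alpha$-invariant construction, extended to higher parameters by Crowley--Schick and Crowley--Schick--Steimle. First, I would reduce to the universal case $N = D^d$: by the additivity of the index-difference along the boundary sphere of the embedded disc, any class $f \in \pi_k(\diff_\partial(D^d))$, extended by the identity on $N \setminus \inn(D^d)$, satisfies $\inddiff_g(f^*g) = \inddiff_{g|_{D^d}}(f^*g|_{D^d})$ in $\ko_{d+k+1}(\pt)$. Hence it suffices to detect the composition on $D^d$ with an arbitrary boundary metric $h' \in \calR^+(S^{d-1})$ and a chosen reference psc-metric (one can take a torpedo metric $g_\tor$).

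Second, I would produce candidate classes via the unstable $J$-homomorphism
\[ j \colon \pi_k(\ort(d)) \too \pi_k(\diff_\partial(D^d)) \]
obtained by radially modulating a rotation: for $\phi \colon S^k \to \ort(d)$ pointed at the identity and a bump function $\rho \colon [0,1] \to [0,1]$ with $\rho \equiv 1$ near $0$ and $\rho \equiv 0$ near $1$, set $j(\phi)(s)(x) = \exp(\rho(|x|) A(s)) \cdot x$, where $A(s) \in \mathfrak{so}(d)$ is a local lift of $\phi(s)$. This defines a diffeomorphism of $D^d$ fixing the boundary pointwise.

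Third, I would identify $\inddiff \circ j$ with the classical $\alpha$-invariant of the associated sphere bundle. The mapping-torus construction applied to a $k$-parameter family of diffeomorphisms of $D^d$ yields a spin disc bundle $E_\phi \to S^{k+1}$; the key input, established at the spectrum level in \cite{CSS}, is that $\inddiff_{g_\tor}(j(\phi)^* g_\tor) \in \ko_{d+k+1}(\pt)$ equals the $\alpha$-invariant of $E_\phi$ rel boundary. For $\phi$ in the image of the $J$-homomorphism, this coincides with the image of $\phi$ under the composition
\[ \pi_k(\ort(d)) \too \pi_k(\ort) \overset{J}{\too} \pi_k^s \overset{\alpha}{\too} \ko_k(\pt) \cong \bbZ/2. \]
By Adams' image-of-$J$ theorem, this composition is surjective in degrees $k \equiv 1, 2 \pmod 8$, equivalently $d+k+1 \equiv 1, 2 \pmod 8$. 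Split-surjectivity onto $\bbZ/2$ is then automatic, since any surjection of abelian groups onto $\bbZ/2$ admits a section.

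The main obstacle is the spectrum-level identification of the index-difference with the $\alpha$-invariant of the mapping-torus bundle. This requires a delooping of $\inddiff$ compatible with fibre bundles over spheres, together with the Atiyah--Singer family index theorem to compute the resulting $\ko$-class explicitly and compare it with the $J$-homomorphism. Hitchin handled the cases $k = 0, 1$ via direct spectral-flow arguments on $S^1$-families, and the general multiparameter case forms the technical heart of \cite{CS, CSS}.
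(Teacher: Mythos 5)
The paper does not actually prove this theorem: it is cited from Hitchin, Crowley--Schick, and Crowley--Schick--Steimle, and the paper only remarks afterwards that the boundary version follows from the closed-manifold case, essentially by the additivity argument with which your proposal correctly begins. Your attempt to re-derive the cited input itself has several concrete errors. Your final displayed composition $\pi_k(\ort(d))\to\pi_k(\ort)\overset{J}\to\pi_k^s\overset{\alpha}\to\ko_k(\pt)$ lands in $\ko_k(\pt)$, whereas the index-difference lands in $\ko_{d+k+1}(\pt)$; the congruences $k\equiv1,2\pmod 8$ and $d+k+1\equiv1,2\pmod 8$ are not equivalent unless $d\equiv7\pmod 8$, so the appeal to Adams' image-of-$J$ theorem does not cover the stated range of $(d,k)$. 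The formula $j(\phi)(s)(x)=\exp(\rho(|x|)A(s))\cdot x$ is not well defined: a continuous $A\colon S^k\to\mathfrak{so}(d)$ with $\exp\circ A=\phi$ would give a null-homotopy of $\phi$ via $(s,t)\mapsto\exp(tA(s))$, so no such lift exists unless $\phi$ is already null-homotopic. And the claim that any surjection of abelian groups onto $\bbZ/2$ splits is false (consider $\bbZ\to\bbZ/2$); split surjectivity requires exhibiting an element of order two in the source mapping nontrivially.

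The structural gap is the more serious one. The hard part of the cited theorems is not a family-index identification of $\inddiff\circ j$ with an $\alpha$-invariant of a disc bundle; it is a theorem about the Gromoll filtration of homotopy spheres. One needs to show that for $n=d+k+1\equiv1,2\pmod8$ there exists a homotopy sphere $\Sigma\in\Theta_n$ with $\alpha(\Sigma)\neq0$ lying at depth $k$ in the Gromoll filtration, i.e.\ realized by the iterated Gromoll maps $\pi_k(\diff_\partial(D^d))\to\pi_0(\diff_\partial(D^{d+k}))\cong\Theta_{d+k+1}$. This is exactly what Crowley--Schick--Steimle prove, refining Crowley--Schick and Hitchin; it involves a delicate analysis of the interaction of the image of $J$ with the Gromoll filtration, and is not a formal consequence of the family index theorem plus Adams' theorem. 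Once that homotopy-theoretic input is available, the conversion to $\inddiff$ follows from Hitchin's original observation identifying the $\alpha$-invariant of the clutched homotopy sphere (equivalently, of the mapping torus) with the index-difference. Your sketch omits the Gromoll-filtration step entirely, and that is where essentially all of the work lives.
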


\begin{rem} 
	Even though this result is only stated for closed manifolds in the papers \cite{hitchin_spinors, CS, CSS}, a version for manifolds with boundary is easily deduced.
\end{rem}

\noindent Next, we will give an explicit model for the homomorphisms
\[\lambda_{k,d}\colon \pi_k(\diff_\partial(D^d))\to \pi_{k-1}\diff_\partial(D^{d+1})\]
inducing the Gromoll-Filtration of homotopy spheres: For $n\ge1$ we fix once and for all $c_n\coloneqq \frac{1}{2\sqrt{n}}$ and note that $[0,c_n]^d\subset D^d$ for every $n\ge d$. Let $f_\sigma\colon[0,1]^k\to \diff_\partial(D^d)$ represent an element $\sigma\in\pi_k(\diff_\partial(D^d))$. After possibly applying a suitable compressing homotopy, we may assume that $f_\sigma(t)(x) = x$ for $(t,x)$ outside of $[0,c_{d+1}]^k\times [0,c_{d+1}]^d$, i.e. $f_\sigma$ is supported on the cube $[0,c_{d+1}]^{d+k}$. For $s\in [0,c_{d+1}]^{k-1}$, $x\in[0,c_{d+1}]^d$ and $l\in[0,c_{d+1}]$ we define
\[F_\sigma(s)(x,\ell)\coloneqq \bigl(\ell,f_\sigma(s,\ell)(x)\bigr).\]
By our assumption on $f_\sigma$ we see that $F_\sigma(s) = \id$ on $\partial([0,c_{d+1}]^{d+1})$ and by our choice of $c_{d+1}$ we can hence extend this by the identity to a map $F_\sigma\colon[0,1]^{k-1}\to \diff_\partial (D^{d+1})$ which is a representative $\lambda_{k,d}(\sigma)$.

 For an element $\sigma\in\pi_k(\diff_\partial(D^d))$ the successive composition of the maps $\lambda_{m,d+m}$ for $0\le m\le k$ yields an element $[f_\sigma]\in\pi_0(\diff_\partial(D^{d+k}))$ which defines a homotopy sphere $\Sigma$ by clutching along $f_\sigma$. The composition in \pref{Theorem}{thm:css} is equal to mapping $\sigma$ to the $\alpha$-invariant of $\Sigma$. We call homotopy spheres with non-vanishing $\alpha$-invariant \emph{Hitchin spheres}. 

\begin{lem}\label{lem:hitchin-boundary}
	Let $M^d$ be a manifold with boundary $\partial M$ and let $h\in\calR^+(\partial M)$ and $g\in \calR^+(M)_h$. Then there exist embeddings $D^{d-1}\embeds \partial M$ and $D^{d}\embeds M$ such that the following diagram commutes
\begin{center}
\begin{tikzpicture}
	\node (0) at (0,1.2) {$\pi_k(\diff_\partial(D^{d-1}))$};
	\node (1) at (0,0) {$\pi_{k-1}(\diff_\partial(D^d))$};
	\node (2) at (4,1.2)  {$\pi_k(\calR^+(\partial M),h)$};
	\node (3) at (4,0) {$\pi_{k-1}(\calR^+(M)_{h},g)$};
	
	\draw[->] (0) to node[left]{$-\lambda_{k,d-1}$} (1);
	\draw[->] (0) to node[above]{$\rho$} (2);
	\draw[->] (1) to node[above]{$\rho$} (3);
	\draw[->] (2) to node[right]{$\partial$} (3);
\end{tikzpicture}
\end{center}
where the horizontal maps $\rho$ are given by extending by the identity outside of the embedded disks and acting via pullback.
\end{lem}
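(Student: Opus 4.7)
The plan is to compute $\partial\circ\rho$ directly via \pref{Lemma}{lem:boundary-map} and match the result with the image of $-\lambda_{k,d-1}$ under the bottom horizontal $\rho$. Both compositions should produce $(k-1)$-parameter families of diffeomorphisms (acting by pullback on the reference metric $g$) that are supported inside one fixed embedded copy of $D^d\subset M$, and the identification will follow by comparing two explicit formulas in coordinates adapted to the collar.

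First, I would fix a collar $c\colon[0,1]\times\partial M\embeds M$ as in \pref{Section}{sec:prelims-psc} and an embedding $\iota\colon D^{d-1}\embeds\partial M$. Using the identification $D^d\cong D^{d-1}\times[0,1]$, I would then define the embedding $\tilde\iota\colon D^d\embeds M$ by $(y,r)\mapsto c(r,\iota(y))$, so that $\tilde\iota(D^d)\cap\partial M=\iota(D^{d-1})$. Taking $\sigma\in\pi_k(\diff_\partial(D^{d-1}))$ represented by $f_\sigma\colon[0,1]^k\to\diff_\partial(D^{d-1})$ with $f_\sigma\equiv\id$ near $\partial[0,1]^k$ and, after a standard compression, with spatial support inside $[0,c_d]^{d-1}\subset D^{d-1}$, the class $\rho(\sigma)$ is represented by $((f_\sigma)_t^*h)_{t\in[0,1]^k}$ after extension by the identity via $\iota$. \pref{Lemma}{lem:boundary-map} then presents $\partial(\rho(\sigma))$ as $(F_s^*g)_{s\in[0,1]^{k-1}}$, where
\[F_s(x)=\begin{cases}c(r,f_\sigma(s,1-r)(y))&\text{if }x=c(r,y)\in\im c,\\ x&\text{otherwise.}\end{cases}\]
Since $f_\sigma(s,\cdot)$ is the identity near $0$ and near $1$, the map $F_s$ is smooth, and by the spatial support condition it is supported inside $\tilde\iota(D^d)$.

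Next, in the coordinates $(y,r)\in D^{d-1}\times[0,1]\cong D^d$ the restriction $\tilde F_s$ of $F_s$ to $\tilde\iota(D^d)$ reads $\tilde F_s(y,r)=(f_\sigma(s,1-r)(y),r)$. Setting $f_{-\sigma}(t_1,\ldots,t_k)\coloneqq f_\sigma(t_1,\ldots,t_{k-1},1-t_k)$, which represents $-\sigma$ since reversing a single parameter inverts an element of $\pi_k$, I obtain $\tilde F_s(y,r)=(f_{-\sigma}(s,r)(y),r)$. This is precisely the explicit formula (with $\ell=r$) for a representative of $\lambda_{k,d-1}(-\sigma)=-\lambda_{k,d-1}(\sigma)$. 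Since $F_s$ is the extension by the identity of $\tilde F_s$, the family $(F_s^*g)$ also represents the image of $-\lambda_{k,d-1}(\sigma)$ under the bottom horizontal $\rho$, so $\partial(\rho(\sigma))=\rho(-\lambda_{k,d-1}(\sigma))$ as required.

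The hardest part will be aligning the two constructions on a single embedded disk so that the collar-based formula of \pref{Lemma}{lem:boundary-map} matches the cube-based formula for $\lambda_{k,d-1}$ on the nose (up to the conventions for ordering coordinates on $D^d=D^{d-1}\times[0,1]$). Once the embeddings are chosen compatibly, the remainder of the argument will be a direct identification of explicit formulas, with the minus sign appearing naturally from the collar-coordinate reversal $r\mapsto 1-r$ built into \eqref{eq:fiber-transport}.
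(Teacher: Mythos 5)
Your proof takes essentially the same route as the paper's: you invoke \pref{Lemma}{lem:boundary-map} to realise $\partial\circ\rho$ by the fiber-transport diffeomorphisms $F_s$, restrict to a disk $\tilde\iota(D^d)$ sitting in the collar over $\iota(D^{d-1})$, and match the resulting coordinate formula against the explicit representative of $\lambda_{k,d-1}$, with the sign coming from the $r\mapsto 1-r$ reversal built into \eqref{eq:fiber-transport}. The one point to tighten is the phrase \enquote{this is precisely the explicit formula}: as written, your $f_{-\sigma}(t_1,\ldots,t_k)=f_\sigma(t_1,\ldots,t_{k-1},1-t_k)$ has parameter support in $[0,c_d]^{k-1}\times[1-c_d,1]$ rather than in $[0,c_d]^k$, so it is not literally in the normal form the $\lambda$-formula assumes; a translation (or compression, as the paper does by first replacing $1-r$ with $c_d-r$) is needed before the two representatives coincide on the nose, after which the identification of homotopy classes is immediate.
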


\begin{proof}
	Let $\sigma\in\pi_k(\diff_\partial(D^{d-1}))$ and let $f_\sigma$ be a representative, for which we again may assume that $f_\sigma(t)(x) = x$ for $(t,x)\not\in[0,c_{d}]^k\times [0,c_{d}]^{d-1}$ (see above for the definition of $c_d$). Let $D^{d-1}\subset \partial X$ be some embedded disk and let $f_\sigma$ be extended by the identity to $\partial X$. By \pref{Lemma}{lem:boundary-map} the composition $\partial\circ \rho$ is given by pullback along the following family of diffeomorphisms
	\begin{equation}\label{eq:above}
		F(s)(x) = \begin{cases}
			x & \text{ if } x\not\in\im c\\
			c(r, f_\sigma(s,(1-r))(y)) &\text{ if } x=c(r,y)
		\end{cases}
	\end{equation}
	where $c\colon [0,1]\times  \partial X\embeds X$ is the collar embedding. Applying a compressing and shifting homotopy, we may assume that $F(s)(c(r,y)) = c(r, f_\sigma(s,(c_d-r))(y))$ By our assumption on $f_\sigma$, $F$ is concentrated in an embedded cube $[0,c_d]^d\subset S^{d-1}\times[0,1]$. We choose an embedded disk $\iota_D\colon D^d\subset \partial X\times[0,1]$ that extends the inclusion of $[0,c_d]^{d}\subset \partial X\times[0,1]$. This is depicted in \pref{Figure}{fig:collar}.
\begin{figure}[ht]
\begin{tikzpicture}
	\node at (0,0) {\includegraphics[width=0.5\textwidth]{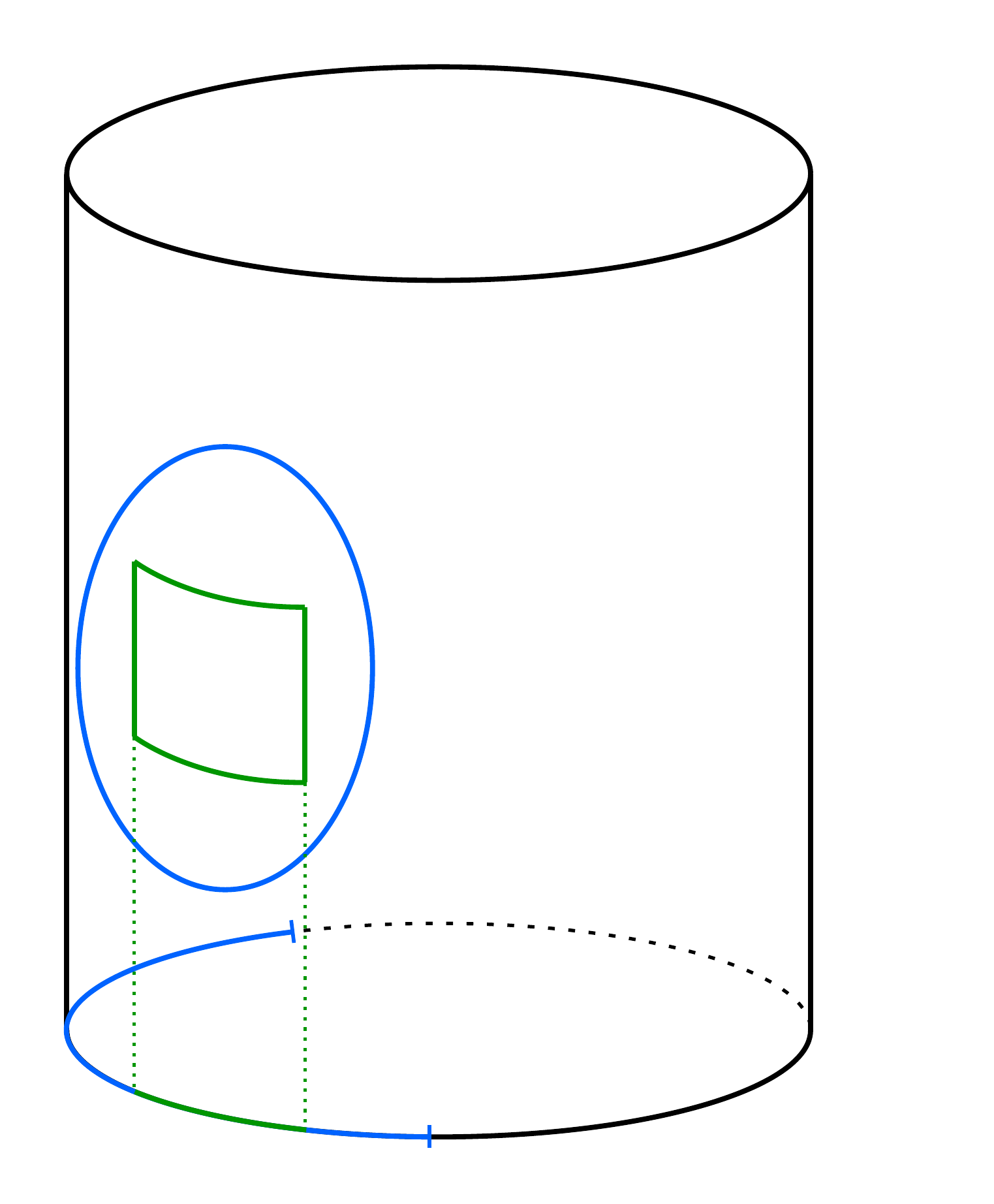}};
	\node at (-1.8,-0.5) {\footnotesize$[0,c_d]^d$};	
	\node at (-1.8,-4) {\footnotesize$[0,c_d]^{d-1}$};
	\node at (-1,1.4) {$\iota_D(D^{d})$};
	\node at (3.5,3) {$\partial X\times\{1\}$};
	\node at (3.5,-3) {$\partial X\times\{0\}$};
	\node at (-4,-3) {$D^{d-1}\subset \partial X$};
\end{tikzpicture}
\caption{The collar $\partial X\times [0,1]$ of $X$. }\label{fig:collar}
\end{figure}\\
	For $s\in[0,1]^{k-1}$ and $(y,\ell)\in D^{d}$, a representative of $-\lambda_{k,d-1} (\sigma) = \lambda_{k,d-1}(-\sigma)$ is given by
	\begin{align*}
		F_{-\sigma}(s)(y,\ell) &= (\ell,f_{-\sigma}(s,\ell)(y))\\
			& = (\ell,f_\sigma(s,c_d-\ell)(y))
	\end{align*}
	since $f_\sigma$ was supported on $[0,c_d]^{k}\times[0,c_d]^{d-1}$. After applying an appropriate shifting and rescaling homotopy that stretches the embedded cube to the collar, we see that $\iota_D\circ(-\lambda_{k,d-1})(\sigma)$ is represented by $F$ in (\ref{eq:above}) which proves the lemma.
\end{proof}

\noindent Let us close this section with the following well known proposition which is the key observation enabling us to employ \pref{Theorem}{thm:action-of-mcg} and \pref{Corollary}{cor:action-of-mcg}. 

\begin{prop}\label{prop:nullbordant}
	Every homotopy sphere is oriented nullbordant.
\end{prop}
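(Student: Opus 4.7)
The plan is to prove this via Thom's characterisation of the oriented cobordism ring: a closed oriented manifold $N^n$ is oriented nullbordant if and only if all its Stiefel--Whitney numbers and all its Pontryagin numbers vanish. Given a homotopy sphere $\Sigma^n$, I will check these two families of characteristic numbers directly.

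First, I would dispose of the Stiefel--Whitney numbers. Since $\Sigma^n$ is homotopy equivalent to $S^n$ (by definition) and the Stiefel--Whitney classes of the tangent bundle are homotopy invariants of the manifold (Wu's formula expresses them in terms of Steenrod operations on the mod~$2$ cohomology together with the fundamental class), one gets $w_i(\Sigma^n)=w_i(S^n)=0$ for all $i>0$. Hence every Stiefel--Whitney number of $\Sigma^n$ vanishes.

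The more interesting step concerns Pontryagin numbers. A Pontryagin number has the form $\langle p_{i_1}\cdots p_{i_r},[\Sigma^n]\rangle$ with $4(i_1+\cdots+i_r)=n$. Each factor lies in $H^{4i_j}(\Sigma^n;\bbZ)$, and these groups vanish whenever $0<4i_j<n$ because $\Sigma^n$ has the integral cohomology of a sphere. Consequently, every product of length $r\ge 2$ evaluates to zero, and the only potentially nonzero Pontryagin number is $\langle p_k,[\Sigma^{4k}]\rangle$ in the case $n=4k$. This last case is where the actual work sits; I would handle it with the Hirzebruch signature theorem. Since $H^{2k}(\Sigma^{4k};\bbQ)=0$, the intersection form is trivial and hence $\sign(\Sigma^{4k})=0$. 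On the other hand, $L_k(p_1,\ldots,p_k)$ is a linear combination of $p_k$ and products of lower Pontryagin classes; the latter products again pair to zero on $\Sigma^{4k}$ by the cohomological reason above. Since the coefficient of $p_k$ in $L_k$ is nonzero (Hirzebruch), the signature theorem forces $\langle p_k,[\Sigma^{4k}]\rangle=0$.

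Having shown that every Stiefel--Whitney and every Pontryagin number of $\Sigma^n$ vanishes, Thom's theorem concludes that $\Sigma^n$ is oriented nullbordant. The only genuine obstacle is the top Pontryagin number in dimensions $4k$; everything else is immediate from the cohomology of a sphere. An equally valid alternative, which I would mention as a remark rather than the main route, is to invoke the Kervaire--Milnor result that every homotopy sphere bounds a (stably) parallelisable manifold, which directly exhibits a nullbordism but relies on deeper surgery-theoretic input than the characteristic-number argument.
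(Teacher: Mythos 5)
Your proof is correct and follows the same characteristic-number strategy as the paper: both arguments kill the intermediate characteristic numbers using the vanishing cohomology of a sphere and handle the top Pontryagin number via the Hirzebruch signature theorem. The only cosmetic difference is that you dispose of all Stiefel--Whitney numbers at once via Wu's formula, whereas the paper kills the top one by noting it equals the Euler characteristic modulo $2$, which vanishes for a homotopy sphere.
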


\begin{proof}
	Oriented bordism is classified by Pontryagin- and Stiefel--Whitney numbers. For a homotopy sphere all characteristic classes have to vanish except for possibly the top ones. However, the top Pontryagin number is a nonzero multiple of the signature and the top Stiefel--Whitney number is the mod $2$ reduction of the Euler characteristic. Both of these vanish for homotopy spheres.
\end{proof}

\section{Proofs of the main results}\label{sec:proofs}

\noindent The proofs for \pref{Theorem}{main:hitchin} and the first half of \pref{Theorem}{main:boundary} (i) both rely on the following lemma.

\begin{lem}\label{lem:rigidity}
	Let $d\ge6$ and let $f_\Sigma \colon (D^d,\partial D^d) \to (D^d, \partial D^d)$ be a diffeomorphism as in \pref{Theorem}{thm:css} and let $M^d$ be a compact, oriented, totally nonspin-manifold with spin boundary $\partial M$ which is $\Spin\times B\pi_1(\partial M)$-nullbordant or empty. Let furthermore $D\subset M$ be an embedded disk away from the boundary and let $f\coloneqq f_\Sigma\cup\id$. Then, there exists an $h\in\calR^+(\partial M)$ such that for all $g\in \calR^+(M)_h$, the metric $f^*g$ is homotopic to $g$.
\end{lem}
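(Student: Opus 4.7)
The strategy is to enlarge the situation to a closed manifold and then invoke \pref{Corollary}{cor:action-of-mcg}. If $\partial M = \emptyset$, set $\bar M \coloneqq M$. Otherwise, the hypothesis that $\partial M$ is $\Spin\times B\pi_1(\partial M)$-nullbordant yields a spin nullbordism $W^d$ of $\partial M$ compatible with the $B\pi_1(\partial M)$-reference. A standard surgery argument on the interior of $W$ (in sufficiently high codimension to preserve both the spin structure and the reference map) produces a replacement $W$ with $\partial W \embeds W$ two-connected. Put $\bar M \coloneqq M \cup_{\partial M} W$; two-connectedness of the boundary inclusion together with van Kampen forces $\pi_1(\bar M) \cong \pi_1(M)$, so $\bar M$ remains totally nonspin. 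Its stabilised tangential $2$-type is therefore $\theta = \bso(n) \times B\pi_1(\bar M)$ by \pref{Example}{ex:2-types}.

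Extend $f = f_\Sigma \cup \id$ to $\bar f\colon \bar M \to \bar M$ by declaring $\bar f|_W = \id_W$; this is smooth because $f$ restricts to the identity on a neighbourhood of $\partial M$. Choose a $\theta$-structure $\hat l$ on $\bar M$ and a bundle-map homotopy $L$ producing a lift $(\bar f, L) \in \Gamma^\theta(\bar M, \hat l)$. With $N \coloneqq W$ the hypotheses of \pref{Corollary}{cor:action-of-mcg} are met: $\bar f|_N = \id$ and $\partial N \embeds N$ is two-connected. Once we verify that $T_{\bar f}$ is $\theta$-cobordant to a mapping torus of the identity, the corollary supplies an $h \in \calR^+(\partial N) = \calR^+(\partial M)$ for which $f^* g \simeq g$ holds for every $g \in \calR^+(\bar M \setminus N)_h = \calR^+(M)_h$, completing the proof.

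The heart of the argument is thus to show $[T_{\bar f}] = [T_{\id_{\bar M}}]$ in $\Omega_{d+1}^\theta$. Since $\bar f$ equals the identity outside the disk $D \subset \inn M$, the two mapping tori agree on $(\bar M \setminus \inn D) \times S^1$ and differ only inside $D \times S^1$, where $T_{\bar f}$ contains the twisted piece $T_{f_\Sigma}$ while $T_{\id_{\bar M}}$ contains $D^d \times S^1$. The difference class is represented by the closed $(d+1)$-manifold
\[
\mathcal{T} \coloneqq T_{f_\Sigma} \cup_{S^{d-1}\times S^1}\bigl(-D^d \times S^1\bigr),
\]
an $S^d$-bundle over $S^1$ whose monodromy $f_\Sigma \cup \id_{D^d}\colon S^d \to S^d$ clutches precisely to the Hitchin homotopy sphere $\Sigma^d$ determined by $f_\Sigma$. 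Because $\mathcal{T}$ is concentrated in a contractible neighbourhood of $D \times S^1$, its reference map to $B\pi_1(\bar M)$ may be taken null-homotopic, so $[\mathcal{T}] \in \Omega_{d+1}^\theta$ is pulled back from $\Omega_{d+1}^{SO}$. By \pref{Proposition}{prop:nullbordant} the homotopy sphere $\Sigma^d$ bounds an oriented manifold $V^{d+1}$; combining $V$ with \pref{Lemma}{lem:double} (to straighten the monodromy into a cylindrical piece) produces an oriented nullbordism of $\mathcal{T}$, whence $[\mathcal{T}] = 0$ in $\Omega_{d+1}^{SO}$ and therefore in $\Omega_{d+1}^\theta$.

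The main obstacle is this final cobordism-theoretic computation: identifying the local difference of the two mapping tori with $\mathcal{T}$ and then assembling an explicit nullbordism from $V$ requires some care about $\theta$-structures near the gluing locus. The decisive simplification is that because $\bar M$ is totally nonspin, $\theta$-bordism forgets spin structures and reduces to oriented bordism over $B\pi_1(\bar M)$, so the vanishing of $[\Sigma^d]$ guaranteed by \pref{Proposition}{prop:nullbordant} is all that is needed. This reduction is precisely what the analogous argument in the spin case cannot perform, consistent with the failure of the propagation principle asserted in \pref{Theorem}{main:hitchin}.
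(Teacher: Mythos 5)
Your proposal follows the same overall strategy as the paper: glue a highly connected $\Spin\times B\pi_1(\partial M)$-nullbordism $W$ to $M$ to obtain a closed manifold $\bar M$, extend $f$ by the identity, lift $\bar f$ to the $\theta$-mapping class group $\Gamma^\theta(\bar M,\hat l)$, show the mapping torus $T_{\bar f}$ agrees with $T_{\id_{\bar M}}$ in $\Omega_{d+1}^\theta$ (using that the Hitchin sphere $\Sigma$ is oriented nullbordant, \pref{Proposition}{prop:nullbordant}), and then apply \pref{Theorem}{thm:action-of-mcg} and \pref{Corollary}{cor:action-of-mcg}. The paper packages the bordism step as the single claim $[T_{\bar f}]=[\bar M\times S^1\amalg\Sigma,\beta]$ in $\Omega_{d+1}(B\pi)$; you rephrase it via the local difference $\mathcal{T}=T_{f_\Sigma}\cup_{S^{d-1}\times S^1}(-D^d\times S^1)$, the mapping torus of $f_\Sigma\cup\id_{D^d}\colon S^d\to S^d$, which is a legitimate cut-and-paste repackaging of the same computation.

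Two points in your write-up are asserted where the paper actually gives an argument. First, you say ``Choose a $\theta$-structure $\hat l$ on $\bar M$ and a bundle-map homotopy $L$ producing a lift $(\bar f, L)\in\Gamma^\theta(\bar M,\hat l)$,'' but this existence is the one place where being \emph{totally nonspin} enters the structure-lifting step: a $\theta$-structure for $\theta=\bso\times B\pi$ is an orientation together with a reference map $\gamma\colon\bar M\to B\pi$, and a lift $(\bar f,L)$ exists precisely because $\gamma$ and $\gamma\circ\bar f$ are homotopic; the paper proves this by noting $\bar f_*=\id$ on $\pi_1$ (since $\bar f$ is supported in a disk) and that maps into the Eilenberg--MacLane space $B\pi$ are determined up to homotopy by $\pi_1$. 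You should spell this out rather than take the lift for granted. Second, the claim that combining the nullbordism $V$ of $\Sigma$ with \pref{Lemma}{lem:double} ``produces an oriented nullbordism of $\mathcal{T}$'' is not clearly justified: \pref{Lemma}{lem:double} concerns doubles of $\theta$-cobordisms and does not obviously apply to the mapping torus $\mathcal{T}$; the relevant and cleaner fact (used implicitly by the paper) is that the mapping torus of a diffeomorphism of a closed $N$ supported in a disk is cobordant over $B\pi_1$ to $N\times S^1\amalg\Sigma$ with constant reference map on $\Sigma$, so that $[\mathcal{T}]=[S^d\times S^1]+[\Sigma]=0$. With those two points filled in, your argument coincides with the paper's.
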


\begin{rem}
If $\partial M=\emptyset$, then the conclusion of the lemma holds for all $g\in\calR^+(M)$.
\end{rem}

\begin{proof}
	By the assumption on $\partial M$, there is a nullbordism $W$ of $\partial M$ such that the inclusion $\partial M\embeds W$ is $2$-connected. This ensures that we are in a situation where \pref{Theorem}{thm:action-of-mcg} (if $\partial M=\emptyset$) and \pref{Corollary}{cor:action-of-mcg} (if $\partial M\not=\emptyset$) are applicable.

	If $M$ is orientable with fundamental group $\pi$ then its stabilised tangential $2$-type is given by the projection $\bso(d+1)\times B\pi\to \bso(d+1)\to\bo(d+1)$. A $\theta$-structure in this case is a bundle map $TM\to \theta^*U_{d+1} \cong \theta_{SO}^*U_{d+1} \times B\pi$ for $\theta_{SO}\colon \bso(d+1)\to B\ort(d+1)$ and hence, it is equivalently described by an orientation and a map $\gamma\colon M\to B\pi$. Therefore, an element in $\Gamma^\theta(M,\gamma)$ is given by a pair $(f,L)$ of an orientation preserving diffeomorphism $f$ and a homotopy $L$ of the maps $\gamma$ and $\gamma\circ f$.
	
	Note that the diffeomorphism $f_\Sigma$ is supported in the disk $D\subset M$ and hence fixes all points $p\in M\setminus D$. Furthermore, the induced map $f_*\colon\pi_1(M,p)\to \pi_1(M,p)$ is the identity since any loop is homotopic to one, which does not intersect $D$. Therefore, the maps $\gamma$ and  $\gamma\circ f$ induce equal maps on fundamental groups. The space $B\pi=K(G,1)$ is an Eilenberg-Maclane space and maps from $CW$-complexes into Eilenberg-Maclane spaces are determined uniquely up to homotopy by the induced map on $\pi_1$ (see e.g. \cite[Proposition 1B.9]{hatcher_at}). Therefore, we deduce that the maps $\gamma$ and $\gamma\circ f$ are homotopic. This implies that $f$ is in the image of the forgetful map $\Phi\colon\Gamma^\theta(M,\gamma)\to\pi_0\diff(M)$ from (\ref{eq:forgetful}). The mapping torus of $f$ is $\so\times B\pi_1(M)$-cobordant to $[M\times S^1\amalg \Sigma, \beta]$ in $\Omega_{d+1}(B\pi)$, where $\Sigma$ is the Hitchin sphere associated to $f_\Sigma$ and $\beta$ is constant on $\Sigma$. By \pref{Proposition}{prop:nullbordant}, this is $\so\times B\pi_1(M)$-cobordant to a mapping torus of the identity. We conclude from \pref{Theorem}{thm:action-of-mcg} and \pref{Corollary}{cor:action-of-mcg} that there exists a boundary metric $h\in\calR^+(\partial M)$ such that $f^*\colon \calR^+(M)_h\to\calR^+(M)_h$ is homotopic to the identity and in particular $f^*g\sim g$.	
%
\end{proof}

\begin{proof}[Proof of \pref{Theorem}{main:hitchin}]
Let $g\in\calR^+(M\setminus \inn(D))_{h}$ and $g_0\in\calR^+(D^d)_{h}$ be arbitrary. By \pref{Theorem}{thm:css} there exists a diffeomorphism $f_\Sigma\colon (D^d, \partial D^d)\to(D^d, \partial D^d)$ such that $\inddiff_{g_0}(f_\Sigma^*g_0) \not=0$ and hence $g_1\coloneqq f_\Sigma^*g_0$ is not homotopic to $g_0$. By \pref{Lemma}{lem:rigidity} we have
\[g_1\cup g = (f_\Sigma^*g_0)\cup g = (f_\Sigma\cup \id) (g_0\cup g) \overset{\ref{lem:rigidity}}\sim g_0\cup g\]
which concludes the proof.
\end{proof}

\begin{proof}[Proof of \pref{Theorem}{main:boundary} (i)]
	Let $D^{d-1}\subset \partial M$ be an embedded disk and let $h$ be as in \pref{Lemma}{lem:rigidity}. By \pref{Theorem}{thm:css} there exists a family $(f^D_t)_{t\in S^1}$ of self-diffeomorphisms of $(D^{d-1},\partial D^{d-1})$ such that for $f_t\coloneqq f_t^D\cup \id$ we have $\inddiff_h([f_t^*h])\not=0$ and hence $[f_t^*h]\not=0\in\pi_1(\calR^+(\partial M), h)$ . Consider the long exact sequence of homotopy groups associated to the fibration $\res\colon\calR^+(M)\to\calR^+(\partial M)$:
	\begin{align*}
		\dots\too\pi_1(\calR^+(M))\overset{\res}\too\pi_1(\calR^+(\partial M))&\overset{\partial}\too\pi_{0}(\calR^+(M)_{h_\circ})\too\dots\\	
		[f_t^*h]\qquad&\ \mapsto \qquad\partial[(f_t)^*g]
	\end{align*}
	Now, \pref{Lemma}{lem:hitchin-boundary} states that $\partial([f_t^*h])$ is given by $F^*g$ for $F$ is an extension of $-\lambda_{1,d-1}(f_t)$ by the identity. Therefore, $F$ is a diffeomorphism which clutches a Hitchin-sphere and \pref{Lemma}{lem:rigidity} implies that $F^*g$ and $g$ are homotopic. Hence $[f_t^*h]$ has to be in the image of the restriction map $\pi_1(\calR^+(M))\to\pi_1(\calR^+(\partial M))$.
\end{proof}

\begin{rem}
	It is not possible to use the rigidity result on the pullback action obtained in \cite[Theorem E]{erw_psc3}, as it requires the inclusion $\partial M\embeds M$ to be $2$-connected. Our proof crucially relies on $M$ and $\partial M$ having different tangential $2$-types. 
\end{rem}

\begin{proof}[Proof of \pref{Theorem}{main:boundary} (ii)]
	Let $x\in\ko^{-d}(\pt)$, $h\in\calR^+(\partial M)$ and $g\in\calR^+(M)_h$. Since the $\alpha$-invariant $\Omega_d^\Spin\to\ko^{-d}(\pt)$ is surjective, there exists a closed spin manifold $X$ with $\alpha(X) = x$ and we may without loss of generality assume that $X$ is $2$-connected. By \pref{Theorem}{thm:right-stable} we conclude that there exists a (right-stable) metric $g_x\in\calR^+((\partial M\times[0,1])\#X)_{h,h_x}$ for some boundary metric $h_x\in\calR^+(\partial M)$. Note that $\inddiff_h([h_x]) = -x$ which follows from the definition of $\inddiff$ and the spin bordism invariance of the index of the spin Dirac operator. Gluing $g_x$ onto $g$, we obtain a psc-metric on $M\# X$ that restricts to $h_x$ on the boundary. Now, $X$ is oriented cobordant to a closed, oriented, totally nonspin manifold $X^\orient$, for example $X^\orient$ can be chosen to be $X\#(\cp 2\times S^{d-4})$. Note that $X^\orient$ which admits a metric of positive scalar curvature by \cite[Corollary C]{gl80a}. Hence there is a psc-metric on $M\#X\#(X^\orient)^\op$ restricting to $h_x$ on the boundary by \pref{Lemma}{lem:gajer}. But $M\#X\#(X^\orient)^\op$ is totally nonspin and $\so\times B\pi_1(M)$-cobordant to $M$ relative to the boundary. Therefore, there is a psc-metric $g_{M,x}$ on $M$ restricting to $h_x$ on the boundary which finishes the proof.
\end{proof}

\section{Concordance classes of psc-metrics}\label{sec:concordance}

\noindent Having proven \pref{Theorem}{main:hitchin}, it is natural to ask if all psc-metric on the disc become isotopic after extending them to a totally nonspin manifold. It is well-known that for $d\ge7$, $d\equiv 3(4)$ there is an infinite family $(g_n)_{n\in\bbN}\subset\calR^+(D^d)_{h_\circ}$ of pairwise non-isotopic and even non-concordant metrics\footnote{Recall that two psc-metrics $g_0$ and $g_1$ on $M$ are called \emph{concordant} if $\calR^+(M\times[0,1])_{g_0\amalg g_1}\not=\emptyset$, i.e. if they can be connected by a psc-metric $G$ on the cylinder. If $M$ itself has boundary $\partial M$, we require the concordance to be cylindrical in a neighbourhood of $\partial M\times [0,1]$. Note that isotopic metrics are concordant, see for example by \cite[Lemma 2.5]{ebertfrenck}}. Such a family can be constructed as follows:

Let $\beta$ be a $(d+1)$-dimensional, $2$-connected (spin) manifold, with $\alpha(\beta)=1\in\ko^{-d-1}(\pt)\cong\bbZ$. Let $\beta_n\coloneqq\beta^{\#n}$ denote the $n$-fold connected sum of $\beta$ with itself and let $W_n\coloneqq S^{d}\times [0,1]\#\beta_n$. Since $W_n$ is $2$-connected, it admits a metric $G_n'$ of positive scalar curvature, which is of product type in a neighbourhood of the boundary and restricts to the metric $g_\tor\cup g_\tor^\op$ on the lefthand boundary by \pref{Theorem}{thm:right-stable}. Furthermore, we may assume that the metric $G_n'$ restricted to the cylinder over the lower hemisphere $D_{-}^d\subset D_{-}^d\cup D^d_+ =  S^d$ is given by the cylinder over a torpedo-metric by Chernysh's parametrised surgery theorem (cf. \pref{Remark}{rem:chernysh} (ii)). Restricting to the complement of $\inn(D_{-}^d)\times[0,1]$ yields a psc-metric $G_n$ on $W_n\setminus (\inn(D_{-}^d)\times[0,1])$. The metric $G_n$ restricted to the boundary is given as follows: On the left-hand part of the boundary it is a torpedo-metric $g_\tor$, on the bottom part of the boundary it is equal to $h_\circ+\dt^2$ for $h_\circ$ the round metric and on the right-hand boundary it restricts to some metrics $g_n$, see \pref{Figure}{fig:boundary}. 

\begin{figure}[ht]
\begin{tikzpicture}
	\node at (0,0) {\includegraphics[width=0.4\textwidth]{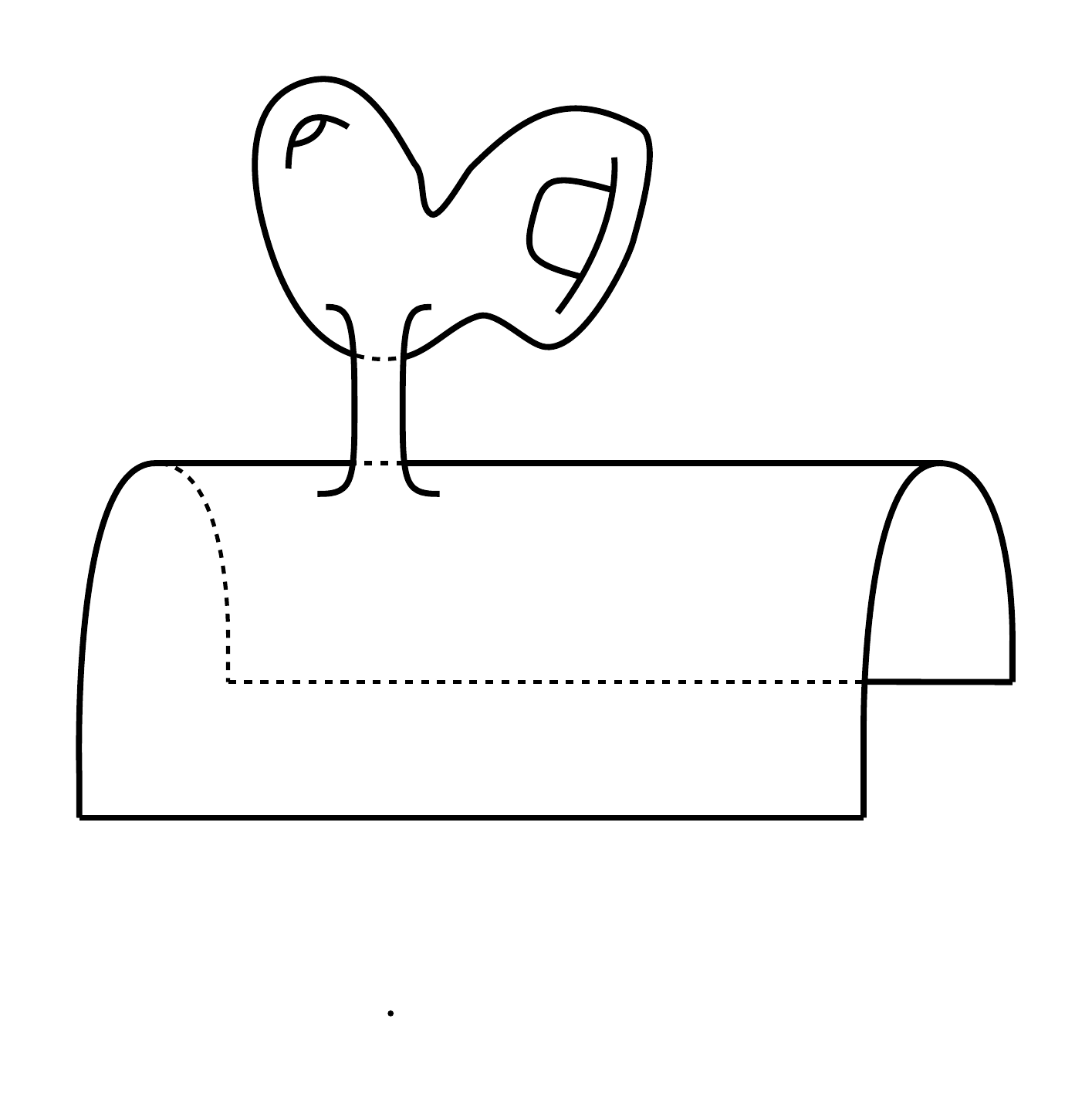}};
	\node at (1,1.7) {$\beta_n$};
	\node at (-2.4,0.5) {$D^d_+$};
	\node at (0.5,0) {$G_n$};
	\node at (-3,-0.5) {$g_\tor$};
	\node at (-0.2,-1.7) {$h_\circ + \dt^2$};
	\node at (2.8,-0.2) {$g_n$};
\end{tikzpicture}
\caption{The manifold $W_n\setminus (D_{-}^d\times [0,1])$ with the psc-metric $G_n$.}\label{fig:boundary}
\end{figure}

\noindent Again, by the spin bordism invariance of the index, we have $\inddiff_{g_\tor}(g_n) = \alpha(\beta_n) = n$, hence the metrics $(g_n)_{n\in\bbN}$ are pairwise not concordant.  We arrive at the following first instance of the question mentioned above.

\begin{question}\label{q:isotopic}
	Do the metrics $(g_n)_{n\in\bbN}$ become isotopic after extending them arbitrarily to a totally nonspin manifold?
\end{question}

\noindent Note that the proof of \pref{Theorem}{main:hitchin} relies crucially on the observation that Hitchin spheres are oriented nullbordant (see \pref{Proposition}{prop:nullbordant}) which then enables the use of \pref{Theorem}{thm:action-of-mcg} and \pref{Corollary}{cor:action-of-mcg}. The manifold $\beta$ however is not oriented nullbordant, since the $\alpha$-invariant of a spin manifold in these dimensions is equal to a multiple of its $\ahat$-genus, which is a Pontryagin number, hence it is not possible to use these rigidity results.\footnote{Note that the metrics $g_n$ are not in the image of the orbit map $\pi_0(\diff_\partial(D^d))\to\pi_0(\calR^+(D^d)_{h_\circ}$ by \cite[Theorem B]{actionofmcg}. This problem could however be solved by either directly utilising the cobordism result obtained in \cite{actionofmcg} or by replacing the disc with another manifold. By \cite[Theorem D]{ahatblock}, the image of the orbit map $\pi_0(\diff(M))\to\pi_0(\calR^+(M))$ contains an element of infinite order for every manifold $M$ of dimension $d\ge7$ and $d\equiv3(4)$ that has a nontrivial rational Pontryagin class. The mapping torus of one of these diffeomorphisms however is not oriented nullbordant.} However, when working with concordance classes instead, we have the following result pointing towards an affirmative answer of \pref{Question}{q:isotopic}.

\begin{thm}\label{thm:concordance}
Let $M$ be a $d$-dimensional, closed, oriented, totally nonspin manifold with $D\subset M$ an embedded disk and let $g\in\calR^+(M\setminus \inn(D))_{h_\circ}$. Then the metrics $g_j\cup g$ and $g_i\cup g$ are concordant for all $i,j\in\bbN$.
\end{thm}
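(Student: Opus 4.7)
The plan is to mirror the proof of \pref{Theorem}{main:boundary} (ii): I will build a psc-metric on an auxiliary cobordism which is $\so\times B\pi_1(M)$-cobordant, rel boundary, to $M\times[0,1]$, and then apply the totally nonspin surgery theorem to transfer the psc-metric onto $M\times[0,1]$ itself, producing the concordance.

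First, I would construct a psc-cobordism of discs from $g_i$ to $g_j$: concatenating $V_i^\op$ with $V_j$ (where $V_n:=W_n\setminus(\inn(D_-^d)\times[0,1])$) along their common torpedo end yields a psc-metric on the manifold with corners $V_{i,j}\cong (D^d\times[0,1])\#K$, with $K:=\beta_j\#\beta_i^\op$, restricting to $g_i$ and $g_j$ on the two disc ends and to $h_\circ+\dt^2$ on the cylindrical face $S^{d-1}\times[0,1]$. Gluing $V_{i,j}$ to $(M\setminus\inn(D))\times[0,1]$ (equipped with $g+\dt^2$) along this common face produces a psc-metric on $V:=M\times[0,1]\#K$ restricting to $g_i\cup g$ and $g_j\cup g$ at the two ends. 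Since $K$ is simply connected and $M$ is totally nonspin, so is $V$.

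The obstruction to $V$ being $\so\times B\pi_1(M)$-cobordant rel boundary to $M\times[0,1]$ is the closed oriented class $[K]\in\Omega^\so_{d+1}$, which is generically nonzero (its signature equals $(j-i)\sigma(\beta)$). To kill it, I would set $Y:=K^\op\#(\cp 2\times S^{d-3})$, which is closed, oriented, simply connected, totally nonspin (thanks to the $\cp 2$-factor), and represents $-[K]$ in $\Omega^\so_{d+1}$ (as $\cp 2\times S^{d-3}=\partial(\cp 2\times D^{d-2})$ is $\so$-nullbordant). By \cite[Corollary C]{gl80a}, $Y$ admits a psc-metric. Forming the interior connect sum $V\#Y$ (a codimension $\geq 3$ surgery, hence psc-preserving by Gromov--Lawson) produces a psc-metric on
\[
V\#Y\;\cong\;M\times[0,1]\,\#\,K\,\#\,K^\op\,\#\,(\cp 2\times S^{d-3})
\]
that still restricts to $g_i\cup g$ and $g_j\cup g$ and is still totally nonspin, whose difference to $M\times[0,1]$ in oriented bordism rel boundary equals $[K]+[K^\op]+[\cp 2\times S^{d-3}]=0$ in $\Omega^\so_{d+1}$.

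Because the additional connect-summands are simply connected, the reference map to $B\pi_1(M)$ extends over the bordism, so $V\#Y$ and $M\times[0,1]$ are $\so\times B\pi_1(M)$-cobordant rel boundary. The boundary version of \pref{Lemma}{lem:gajer} then equips $M\times[0,1]$ with a psc-metric restricting to $g_i\cup g$ and $g_j\cup g$, which is precisely the desired concordance. The main difficulty is exhibiting a single auxiliary manifold $Y$ that is simultaneously totally nonspin, psc, and inverse to $[K]$ in oriented bordism; the $\cp 2\times S^{d-3}$-trick borrowed from the proof of \pref{Theorem}{main:boundary} (ii) handles this cleanly.
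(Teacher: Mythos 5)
Your proof is correct and rests on the same essential ingredients as the paper's argument: the surgery lemma (\pref{Lemma}{lem:gajer}), Gromov--Lawson existence for simply connected totally nonspin manifolds, and the trick of connect-summing with a bounding totally nonspin manifold of the form $\cp{2}\times S^{\ast}$. The implementation, however, is slightly more direct than the paper's. The paper first produces a second psc-metric $G$ on $W_{M,n}$ restricting to $g_M$ at \emph{both} ends (this already costs two applications of \pref{Lemma}{lem:gajer}, passing through the auxiliary $W_{M,n}^{\orient}$), glues $G^{\op}$ to $G_{M,n}$, and then invokes the fact that the double of $W_{M,n}$ is $\theta$-cobordant to $M\times[0,1]$ rel boundary (\pref{Lemma}{lem:double}) before applying \pref{Lemma}{lem:gajer} one more time. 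You instead exhibit an explicit oriented-bordism inverse $Y=K^{\op}\#(\cp{2}\times S^{d-3})$ to $K=\beta_j\#\beta_i^{\op}$ that is simultaneously simply connected, totally nonspin and psc, and connect-sum it into the interior of $V$, after which a single application of \pref{Lemma}{lem:gajer} finishes the job. This bypasses the double construction entirely and saves a use of the surgery lemma; the underlying reason both versions work is the same, namely that the obstruction to being $\so\times B\pi_1(M)$-cobordant to $M\times[0,1]$ rel boundary is a class in $\Omega^{\so}_{d+1}$ which can be killed by a simply connected totally nonspin connect-summand admitting psc. As a minor note, your $\cp{2}\times S^{d-3}$ is the correct dimension (the summand must be $(d+1)$-dimensional to match $\beta_n$), whereas the published proof carries over the $d$-dimensional $\cp{2}\times S^{d-4}$ from the proof of \pref{Theorem}{main:boundary}(ii) as an off-by-one typo.
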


\begin{proof}
Now let $W_n$, $G_n$ and $g_n$ be as above. We define 
\[W_{M,n}\coloneqq \Bigl(W_n\setminus \bigl(\inn(D_{-}^d)\times[0,1]\bigr)\Bigr)\cup_{S^{d-1}\times[0,1]} \Bigl(\bigl(M\setminus \inn(D)\bigr)\times[0,1]\Bigr)\] 
and let $G_{M,n}\coloneqq G_n\cup g+dt^2$ and $g_{M,n}\coloneqq g_n\cup g$. We will show that $g_{M,n}$ is concordant to $g_M\coloneqq g_{M,0}$. Note that $\beta_n$ is oriented cobordant to a simply connected, oriented, (totally) nonspin manifold $\beta_n^\orient$, for example take $\beta_n^\orient\coloneqq\beta_n\#(\cp{2}\times S^{d-4})$. By \cite[Corollary C]{gl80a}, $\beta_n^\orient$ admits a psc-metric and by \pref{Lemma}{lem:gajer}, there is a psc-metric on $W_{M,n}^\orient\coloneqq M\times[0,1]\#\beta_n^\orient$, which restricts to $g_M$ on both boundary components. Since $W_{M,n}$ is $\so\times B\pi_1(M)$-cobordant to $W_{M,n}^\orient$ relative to the boundary and the stabilised tangential $2$-type of $W_{M,n}$ is given by $\so\times B\pi_1(M)$, \pref{Lemma}{lem:gajer} further implies that $W_{M,n}$ also admits a psc-metric $G$ restricting to $g_M$ on both boundary components. By gluing the metric $G^{\op}$ obtained by flipping $G$ onto $G_{M,n}$, we obtain a psc-metric on $W\coloneqq\overline{W_{M,n}^\op}\cup W_{M,n}$ restricting to $g_M$ and $g_{M,n}$ on the respective boundary components. But $W$ is the double of $W_{M,n}$ and hence $\so\times B\pi_1(M)$-cobordant to $M\times[0,1]$ relative to the boundary and again by \pref{Lemma}{lem:gajer}, there is a concordance between $g_M$ and $g_{M,n}$.
\end{proof}

\begin{rem}
\begin{enumerate}
	\item As mentioned above, isotopic psc-metrics are concordant. The converse to this is a wide open conjecture. An affirmative solution to that conjecture would imply that \pref{Theorem}{thm:concordance} also holds for isotopy classes and hence yields an affirmative answer to \pref{Question}{q:isotopic}.
	\item To the best of the author's knowledge, all known components of $\calR^+(D^d)_{h_\circ}$ contain one of the metrics from \pref{Theorem}{main:hitchin} or \pref{Theorem}{thm:concordance}. One might be tempted to believe that all psc-metrics on $D^d$ restricting to the round metric on the boundary become concordant or even isotopic after gluing them into a totally nonspin manifold. \pref{Theorem}{main:hitchin} and \pref{Theorem}{thm:concordance} could be seen as evidence to support this believe. 
\end{enumerate}
\end{rem}

\begin{changemargin}{-2cm}{-2cm}
\bigskip
\printbibliography
\bigskip
\end{changemargin}

\end{document}